    \newcommand{\href}[2]{#2}
\newtheorem{theorem}{Theorem}[section]
\newtheorem{corollary}[theorem]{Corollary}
\newtheorem{lemma}[theorem]{Lemma}
\newtheorem{assumption}[theorem]{Assumption}
\newtheorem{remark}[theorem]{Remark}
\numberwithin{equation}{section}  
  \newcounter{mnote}
  \let\oldmarginpar\marginpar
    \renewcommand\marginpar[1]{\-\oldmarginpar[\raggedleft\footnotesize #1]%
    {\raggedright\footnotesize #1}}
\definecolor{myblue}{rgb}{0.2,0.2,0.7}
\definecolor{mygreen}{rgb}{0,0.6,0}
\definecolor{mycyan}{rgb}{0,0.6,0.6}
\definecolor{myred}{rgb}{0.9,0.2,0.2}
\definecolor{mymagenta}{rgb}{0.9,0.2,0.9}
\definecolor{mywhite}{rgb}{1.0,1.0,1.0}
\definecolor{myblack}{rgb}{0.0,0.0,0.0}
\renewcommand{\div}{{\operatorname{div}}}
\renewcommand{\d}{{\operatorname{d}}}
\newcommand{\R}{{\mathbb R}}       
\newcommand{\cD}{{\mathcal D}}
\newcommand{\cI}{{\mathcal I}}
\newcommand{\cN}{{\mathcal N}}
\newcommand{\cT}{{\mathcal T}}
\newcommand{\cV}{{\mathcal V}}
\DeclareMathAlphabet{\mathpzc}{OT1}{pzc}{m}{it}
\newcommand{\f}{\frac}
\newcommand{\of}{\text{ of }}
\newcommand{\on}{\text{ on }}
\newcommand{\an}{\text{ and }}
\newcommand{\inn}{\text{ in }}
\newcommand{\tforall}{\text{ for all }}
\newcommand{\ale}{\text{a.e.\,}} 
\newcommand{\rest}{\big|}
\newcommand{\grad}{\nabla} 
\newcommand{\goto}{\rightarrow}
\newcommand{\pa}{\partial}
\definecolor{blue}{rgb}{0.2,0.2,0.7}
\definecolor{red}{rgb}{0.7,0.3,0.1}
\definecolor{cyan}{rgb}{0.2,0.5,0.6}
\begin{document}

\title[Uniqueness without globally small meshsize]
      {Uniqueness of discrete solutions of nonmonotone PDEs without a globally fine mesh condition
}

\author[S. Pollock]{Sara Pollock}
\email{sara.pollock@wright.edu}

\author[Y. Zhu]{Yunrong Zhu}
\email{zhuyunr@isu.edu}

\address{Department of Mathematics and Statistics\\
         Wright State University\\ 
         Dayton, OH 45435}

\address{Department of Mathematics and Statistics\\
         Idaho State University\\ 
         Pocatello, ID 83209}
\thanks{YZ was supported in part by NSF DMS 1319110.}

\date{\today}

\keywords{
Uniqueness,
comparison principle,
nonmonotone problems,
adaptive methods, 
nonlinear diffusion,
quasilinear equations
}

\subjclass[2000]{65N30, 35J65}

\begin{abstract}
Uniqueness of the finite element solution for nonmonotone quasilinear problems of
elliptic type is established in one and two dimensions.
In each case, we prove a comparison theorem based on locally bounding the 
variation of the discrete solution over each element.  The uniqueness 
follows from this result, and does not require a globally small meshsize.  
\end{abstract}

\maketitle




\section{Introduction}\label{sec:intro} 
We consider the piecewise linear finite element (FE)
approximation of the quasilinear elliptic partial 
differential equation (PDE)
\begin{align}\label{eqn:PDE_strong}
-\div (\kappa(x,u) \grad u) = f ~\inn~ \Omega.
\end{align}
The following assumptions on 
the diffusion coefficient $\kappa(x,s)$, for $x \in \Omega$ and $s \in \R$
are made throughout the remainder of the paper. 
Further assumptions on the source function $f$ are stated as needed in the 
following sections, along with the boundary conditions corresponding to 
\eqref{eqn:PDE_strong}.
\begin{assumption}\label{A1:elliptic}
Assume $\kappa(x,s)$ is a Carath\'eodory function, and assume 
there are constants $0 < k_\alpha < k_\beta$ with
\begin{align}\label{eqn:A1:elliptic}
k_\alpha < \kappa(x, s) < k_\beta,
\end{align} 
for all $s \in \R$, and $\ale x \in \Omega$.
\end{assumption}
Further assume the diffusion coefficient $\kappa$ satisfies the following Lipschitz 
condition.
\begin{assumption}\label{A1:Lipschitz}
There is a constant $L_0 > 0$ with
\begin{align}\label{eqn:A1:Lipschitz}
|\kappa(x,s) - \kappa(x,t)| \le L_0 |s - t|, ~\tforall s, t \in \R, 
                             ~\ale x \in \Omega.
\end{align}
\end{assumption}
Existence and uniqueness of smooth solutions, $u \in C^2(\bar \Omega)$, 
to the continuous problem \eqref{eqn:PDE_strong} were established for the Dirichlet 
problem in \cite{DoDuSe71}. Those results were extended in \cite{HlKrMa94} to 
cover existence and uniqueness of weak solutions $u \in H^1(\Omega)$ under
mixed Dirichlet/Robin conditions on the boundary, and for Dirichlet problems
in \cite{AnCh96b}. 
Uniqueness results and comparison principles for more general classes of 
quasilinear problems are also found in Chapter 10 of \cite{GilbargTrudinger}, 
and in \cite{AlBeMe10}, and the references therein. 
As demonstrated by the counterexamples shown in \cite{AnCh96a}, 
uniqueness for the discrete problem may fail, 
even where it holds for the corresponding continuous problem.

Let  $\Omega \subset \R^n$, $n \ge 1$, be an open bounded polygonal domain, 
(an interval, if $n = 1$) with boundary $\pa \Omega$. 
Let $\cV$ denote the discrete space of continuous piecewise linear
functions subordinate to 
a conforming simplicial partition $\cT$ of $\Omega$ such that functions
$v\in\cV$ vanish on the prescribed Dirichlet part of the boundary. 
We consider the approximation of solutions
to the weak form of \eqref{eqn:PDE_strong} by functions $u \in \cV$.
We refer to Theorem 1.2 of \cite{AnCh96a} and Theorem 2.6 of \cite{HlKrMa94} 
to establish the existence of a solution to the discrete 
Dirichlet and respectively mixed Dirichlet/Neumann problems.
Succinctly, let $f \in H^{-1}(\Omega)$ in the Dirichlet
case, or $f \in L_2(\Omega)$ for the case of mixed boundary conditions, with
bounded, measurable data on the boundary.
Then under Assumptions \ref{A1:elliptic} and \ref{A1:Lipschitz}, 
there exists a solution $u \in \cV$ to \eqref{eqn:PDE_strong} satisfying the 
prescribed conditions on the boundary. The upper bound $k_\beta$ in Assumption ~\ref{A1:elliptic} is used in establishing existence, and not elsewhere
in the remainder of this paper.

We now turn our attention to sufficient conditions to establish uniqueness
of the discrete solution.
In the recent investigation \cite{AbVi12}, regarding the effects of numerical
quadrature on nonmonotone nonlinear problems over $\Omega \subset \R^d,$ $d \le 3$, 
the condition of a globally sufficiently fine mesh as required
in \cite{AnCh96a} is referred to as essential for establishing the
uniqueness of the discrete solution.  
Within the context of adaptive finite element methods, convergence of
a Discontinuous Galerkin method for this class of problems is shown in 
\cite{BiGi13}, under the assumption that the mesh is globally sufficiently fine;
more precisely, under the assumptions of \cite{DoDu75} which develops local 
convergence theory for the piecewise linear finite element solution.
However, recent work by one of the present authors
on the numerical solution of the same class of 
nonmonotone problems by means of adaptive regularized methods 
\cite{Pollock15a,Pollock15b,Pollock16a}, demonstrates that a solution to the 
discrete problem may be recovered under only local adaptive mesh refinement.  
This raises the
question of whether the solution to the discrete problem can be shown unique
under only local criteria.  This paper answers the question in the positive.  

Inspired by the analysis of \cite{AnCh96a}, the discrete finite element
solution is shown here to be unique so long as the variation of the discrete solution
over each mesh element satisfies a given bound related to the constants in 
Assumptions \ref{A1:elliptic} and \ref{A1:Lipschitz}.  
To our knowledge, this is the first result indicating uniqueness
of the discrete solution for this class of problems that does not require a 
globally fine mesh. The results here are viewed as an important step towards
establishing the convergence of an efficiently computed discrete solution 
by means of adaptive mesh refinement.  The issue of convergence of the 
discrete solution to the PDE solution under adaptive refinement is not addressed
here, as the analysis of appropriate {\em a posteriori} error indicators is 
nontrivial and merits separate investigation. 
 
Our approach to establishing uniqueness under local conditions 
is based upon first establishing a comparison theorem from which
the uniqueness of the solution immediately follows.  
We remark that discrete maximum principles have been established for this 
problem and some of its generalizations without requirements on the global
meshsize, as in \cite{KaKo05,Karatson.J;Korotov.S;Krizek.M2007,WaZh12} and the references
therein. These are important tools for studying nonlinear problems because 
they provide {\em a priori} bounds on the discrete solution. 
However, maximum principles are not sufficient to establish uniqueness for the nonlinear PDE, 
even in the continuous setting.

The remainder of the paper is organized as follows. In Section \ref{sec:1D} we first
establish a comparison theorem followed by a uniqueness result for the one 
dimensional Dirichlet problem.  The analysis is then extended to the case of
mixed boundary conditions. In Section \ref{subsec:1Dcounter} we demonstrate
the local criterion for uniqueness on a counterexample where two discrete solutions
are known to exist.  Next, in Section \ref{sec:2D} we first review some useful
facts about the finite element basis functions, then in \ref{subsec:2Dunique} 
we prove a comparison theorem followed by a uniqueness result for the mixed
Dirichlet/Neumann problem. Finally, we demonstrate the local uniqueness condition
on a known two dimensional counterexample with two discrete solutions.
\section{Uniqueness in the one dimensional case}
\label{sec:1D}
The one dimensional quasilinear equation is given by the ordinary differential equation
\begin{align}\label{eqn:strongPDE_1D}
-\f{\d}{\d x}\left( \kappa(x,u) \f{\d}{\d x} u \right)= f, \inn \Omega \subset \R,
\end{align}
subject to appropriate boundary conditions as described in the main theorems below.

As in \cite{AnCh96a} let $\Omega = (a,b)$, with a subdivision
\begin{align}\label{eqn:subdivision1D}
a = a_0 < a_1 < \ldots < a_{n-1} < a_n = b,
\end{align}
where the mesh spacing is not assumed to be uniform.
Define the discrete space $\cV_0 \subset H^1_0(\Omega)$ as the piecewise linear 
Lagrange finite element space
subordinate to subdivision \eqref{eqn:subdivision1D}, and 
subject to homogeneous Dirichlet boundary conditions: $v(a) = v(b) = 0$ for all
$v \in \cV_0$. 
Let $(u,v)$ denote the integral $\int_\Omega u v$, and let $v' = \d v/\d x$. 

\begin{theorem}[Comparison Theorem in 1D]\label{thm:1dcompare}
Assume $f_1, f_2 \in H^{-1}(\Omega)$ satisfy 
\begin{align}\label{eqn:1dun001}
f_1 \le f_2,
\end{align}
in the $H^{-1}(\Omega)$ sense, that is $\int_\Omega f_1 v \le \int_\Omega f_2 v$ for all
non-negative functions $v \in H_0^1(\Omega)$.
Let $u_i \in \cV_0, ~ i = 1,2,$ denote the solution to the respective weak forms of 
\eqref{eqn:strongPDE_1D}, with homogeneous Dirichlet boundary conditions. 
In particular
\begin{align}\label{eqn:1dun002}
(\kappa(x,u_i) u_i', v') = (f_i,v), \tforall v \in \cV_0, \quad i = 1,2.
\end{align}
Define the intervals $\cI_k = (a_{k-1}, a_k), ~k = 1, \ldots, n$, and 
let $h_k = a_k - a_{k-1}$, the length of each respective interval
$\cI_k, ~ k = 1, \ldots, n$. 
Then under the condition
\begin{align}\label{eqn:1dun003}
\max_{0 \le k \le n-1} |u_2(a_k) - u_2(a_{k+1})| < \f{2 k_\alpha}{L_0},
\end{align} 
it holds that 
\begin{align}\label{eqn:1dun004}
u_1 \le u_2.
\end{align}
\end{theorem}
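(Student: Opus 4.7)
The plan is to reduce the comparison statement to a one-dimensional discrete flux balance and then localize to the two intervals where the sign of $w := u_1 - u_2$ flips. I would first test \eqref{eqn:1dun002} against the interior nodal hat functions $\phi_k$, which have $\phi_k' = 1/h_k$ on $\cI_k$ and $\phi_k' = -1/h_{k+1}$ on $\cI_{k+1}$, to rewrite the discrete equation as
\begin{equation*}
\frac{F_i^k}{h_k}-\frac{F_i^{k+1}}{h_{k+1}} = (f_i,\phi_k),\qquad F_i^k := K_i^k\,\Delta_i^k,\quad K_i^k := \frac{1}{h_k}\int_{\cI_k}\kappa(x,u_i)\,\d x,
\end{equation*}
where $\Delta_i^k := u_i(a_k)-u_i(a_{k-1})$ and $K_i^k \ge k_\alpha$. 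Subtracting the equations for $i=1,2$ and using $f_1 \le f_2$ with $\phi_k \ge 0$ shows that the scalar sequence $c_k := (F_1^k - F_2^k)/h_k$ is nondecreasing in $k$.

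A direct manipulation gives the decomposition $c_k = K_1^k\,w'|_{\cI_k} + P_k$ with perturbation $P_k := (K_1^k - K_2^k)\,u_2'|_{\cI_k}$. Assumption~\ref{A1:Lipschitz}, together with the elementary bound $\int_{\cI_k}|w| \le \tfrac{h_k}{2}(|w_{k-1}|+|w_k|)$ valid because $w$ is piecewise linear on the mesh, yields
\begin{equation*}
|P_k| \le \frac{L_0 |\Delta_2^k|}{2 h_k}\bigl(|w_{k-1}|+|w_k|\bigr) < \frac{k_\alpha}{h_k}\bigl(|w_{k-1}|+|w_k|\bigr),
\end{equation*}
where the strict inequality is precisely hypothesis \eqref{eqn:1dun003} applied at index $k$.

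To conclude, I argue by contradiction: suppose $w_k := w(a_k)$ is positive at some node. Using $w_0 = w_n = 0$, set $j := \min\{k : w_k > 0\}$ and $m := \min\{k > j : w_k \le 0\}$, so that $1 \le j < m \le n$. On $\cI_j$ the relations $w_{j-1} \le 0 < w_j$ give $|w_{j-1}|+|w_j| = w_j - w_{j-1}$ and $w'|_{\cI_j} = (w_j - w_{j-1})/h_j > 0$; combined with the bound on $|P_j|$ and $K_1^j \ge k_\alpha$, they force $c_j > 0$. The symmetric computation on $\cI_m$, where $w_{m-1} > 0 \ge w_m$, yields $c_m < 0$. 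Monotonicity of $c_k$ and $j<m$ then produce the contradiction $0 < c_j \le c_m < 0$, so $u_1 \le u_2$.

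The delicate step is recognizing that \eqref{eqn:1dun003} is exactly strong enough to dominate the perturbation $P_k$ on any interval where $w$ changes sign; once this is in place, the comparison reduces to a purely scalar monotonicity argument driven by $f_1 \le f_2$. Note that $k_\beta$ plays no role, consistent with the earlier remark that the upper bound is used only for existence.
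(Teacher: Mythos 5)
Your proof is correct, and it is in substance the same argument as the paper's, just packaged differently. The paper tests the subtracted equation \eqref{eqn:1dun005} against the single block function equal to one on the nodes of the first connected component of $\{w>0\}$ (a sum of interior hats), so that only the two sign-change intervals contribute; you instead test against each interior hat separately, obtain a discrete flux recursion, and phrase the conclusion as monotonicity of the scalar sequence $c_k$ together with $0<c_j\le c_m<0$. Summing your per-node identities over the nodes of the positive block telescopes to exactly the paper's single inequality \eqref{eqn:1dun010}, and the two key estimates coincide: your bound $\int_{\cI_k}|w|\le \tfrac{h_k}{2}(|w_{k-1}|+|w_k|)$ is the paper's \eqref{eqn:1dun008a}, and hypothesis \eqref{eqn:1dun003} enters in both proofs precisely to make $k_\alpha-\tfrac{L_0}{2}|u_2(a_k)-u_2(a_{k-1})|$ positive on the sign-change intervals. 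Your flux formulation arguably makes it more transparent why the interior of the positive block drops out. One pedantic point: the strict inequality $|P_k|<\tfrac{k_\alpha}{h_k}\bigl(|w_{k-1}|+|w_k|\bigr)$ degenerates to an equality of zeros when $|w_{k-1}|+|w_k|=0$, but at the two indices where you invoke it this sum is strictly positive, so the argument is sound as applied.
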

The proof follows similarly to Theorem 2.1 in 
\cite{AnCh96a} with an updated choice of test-function and the avoidance
of global norm estimates. 
\begin{proof}
By \eqref{eqn:1dun002}, subtracting the cases $i=1$ and $i=2$,
\begin{align}\label{eqn:1dun005}
(\kappa(x,u_1) (u_1 - u_2)', v') 
  = \left((\kappa(x,u_2) - \kappa(x,u_1)) u_2', v' \right)
    +(f_1 - f_2, v), \tforall v \in \cV_0.
\end{align}
We prove the theorem by contradiction. Suppose that $w(x)= u_{1}(x) - u_{2}(x) > 0$ anywhere on $a \le x \le b$, then we can define the test function $v \in \cV_0$ as follows.
Let $i$ denote the smallest index such that $w(a_i) > 0$; and, 
let $j$ be the smallest index with $j \ge i$, and such that $w(a_{j+1}) \le 0$. 
Then define the piecewise linear function $v$ by its nodal values
\begin{align}\label{eqn:1dun006}
\left\{ \begin{array}{ll}
v(a_k) = 1, & k = i, \ldots, j, \\
v(a_k) = 0, & \text{otherwise}.
\end{array}\right.
\end{align}
As $u_1$ and $u_2$ satisfy the same boundary conditions, 
we have $v(a_0) = 0$ and $v(a_n)=0$ so that $v$ is a valid test-function, 
that is $v \in \cV_0$. 
As $v' = 0$ on $\Omega \setminus \{ \cI_i \cup \cI_{j+1}\}$,
the left hand side of \eqref{eqn:1dun005} satisfies
\begin{align}\label{eqn:1dun007}
(\kappa(x,u_1) (u_1 - u_2)',v') 
  & = \int_{\cI_i} \kappa(x,u_1)  w' v' 
    + \int_{\cI_{j+1}} \kappa(x,u_2) w'  v'
    \nonumber \\
  & = \int_{\cI_i} \!\kappa(x,u_1) \f{ w(a_i) - w(a_{i-1})  }{h_i^2} 
    + \int_{\cI_{j+1}} \!\!\!\!\kappa(x,u_1) \f{ w(a_j) - w(a_{j+1})  }{h_{j+1}^2} 
    \nonumber \\
  & \ge k_\alpha \left\{ \f{ w(a_i) - w(a_{i-1})  }{h_i}
                       + \f{ w(a_j) - w(a_{j+1})  }{h_{j+1}} \right\}, 
\end{align}
where the ellipticity condition ~\eqref{eqn:A1:elliptic} was applied, as was 
the fact $w(a_i) > w(a_{i-1})$, and $w(a_j) > w(a_{j+1})$, from the definition of
$v$, alongside $v' = 1/h_i$ on $\cI_i$, and $v' = -1/h_{j+1}$ on $\cI_{j+1}$.

Similarly, the first term on the right hand side of ~\eqref{eqn:1dun005} satisfies
\begin{align}\label{eqn:1dun008}
 &\left( (\kappa(x,u_2) - \kappa(x,u_1)) u_2', v' \right) 
  \nonumber \\ 
  & = \int_{\cI_i }(\kappa(x,u_2) - \kappa(x,u_1)) u_2' v' 
      +\int_{\cI_{j+1} }(\kappa(x,u_2) - \kappa(x,u_1)) u_2'  v' 
  \nonumber \\ 
  & = \int_{\cI_i} ( \kappa(x,u_2) - \kappa(x,u_1) ) \f{u_2(a_i) - u_2(a_{i-1})}{h_i^2}
  \nonumber \\
  & -\int_{\cI_{j+1}} ( \kappa(x,u_2) - \kappa(x,u_1) ) 
    \f{u_2(a_{j+1}) - u_2(a_{j})}{h_{j+1}^2}
  \nonumber \\
  & \le \f{L_0}{h_i^2}|u_2(a_{i}) - u_2(a_{i-1}) | \left( \int_{\cI_{i}} |w|\right)
  +  \f{L_0}{h_{j+1}^2}|u_2(a_{j+1}) - u_2(a_{j})|\left( \int_{\cI_{j+1}} |w| \right),
\end{align}
where both the Lipschitz property \eqref{eqn:A1:Lipschitz} of $\kappa$, and the
value of $v$ on $\cI_i$ and $\cI_{j+1}$, as in \eqref{eqn:1dun007}, were applied. 
Noticing that $w(a_{i-1})\le 0$ and $w(a_i) > 0$, and similarly of $w(a_{j+1})$ and 
$w(a_j)$, it follows from the piecewise linearity of $w$ that
\begin{align}\label{eqn:1dun008a}
\int_{\cI_i}|w| \le \f{h_i}{2} (w(a_{i}) - w(a_{i-1})), \an 
\int_{\cI_{j+1}}|w| \le \f{h_{j+1}}{2} (w(a_{j}) - w(a_{j+1})).
\end{align}
Applying \eqref{eqn:1dun008a} to \eqref{eqn:1dun008}, yields 
\begin{align}\label{eqn:1dun008b} 
\left( (\kappa(x,u_2) - \kappa(x,u_1)) u_2', v' \right) 
  & \le \f{L_0}{2 h_i}(w(a_i) - w(a_{i-1}))|u_2(a_i) - u_2(a_{i-1})|
  \nonumber \\
  &  +  \f{L_0}{2 h_{j+1}}(w(a_j) - w(a_{j+1}))|u_2(a_j) - u_2(a_{j+1})|.
\end{align}
Putting together \eqref{eqn:1dun007} and \eqref{eqn:1dun008b} with \eqref{eqn:1dun005}
yields
\begin{align}\label{eqn:1dun009}
k_\alpha  \left\{ \f{ w(a_i)\!-\!w(a_{i-1})  }{h_i} 
        \! +\! \f{ w(a_j)\!-\!w(a_{j+1})  }{h_{j+1}} \right\} \!
  & \le \f{L_0}{2 h_i}(w(a_i) - w(a_{i-1}))|u_2(a_i) - u_2(a_{i-1})|
  \nonumber \\
   &\!+\!  \f{L_0}{2 h_{j+1}}(w(a_j) \!-\! w(a_{j+1}))|u_2(a_j) \!-\! u_2(a_{j+1})| 
  \nonumber \\
  & + (f_1 - f_2,v).
\end{align}
Rearranging terms yields
\begin{align}\label{eqn:1dun010}
\f {1}{h_i} &\left\{ k_\alpha - \f{L_0}{2} | u_2(a_i) - u_2(a_{i-1}) |\right\} 
                                    ( w(a_i) - w(a_{i-1})     )
\nonumber \\ 
&+\f {1}{h_{j+1}} \left\{ k_\alpha - \f{L_0}{2} | u_2(a_{j+1}) - u_2(a_{j}) |\right\} 
                                    ( w(a_j) - w(a_{j+1})     )
\nonumber \\
& \le (f_1 - f_2, v).
\end{align}
Noting again the terms $(w(a_i) - w(a_{i-1}))$ and $(w(a_{j}) - w(a_{j+1}))$, are both
strictly positive, and $(f_1 - f_2,v) \le 0$, this yields a contradiction under the 
condition \eqref{eqn:1dun003}, from which it is clear the test-function $v$
cannot be so defined, and the conclusion follows.
\end{proof}

The main purpose of Theorem~\eqref{thm:1dcompare} is to establish the following 
immediate corollary, namely the uniqueness of the one dimensional linear
FE solution without a global meshize condition.  The sufficient condition 
used here bounds the change in the FE solution over a single mesh element.  
This of course can be controlled by {\em locally} reducing the meshsize.
As such, the result is suitable for an adaptive mesh refinement algorithm. 

\begin{corollary}[Uniqueness of the FE Solution in 1D]
\label{cor:1d_unique}
Let $f \in H^{-1}(\Omega)$, 
and let $u \in \cV_0$ denote a solution to the weak form of 
\eqref{eqn:strongPDE_1D}, with homogeneous Dirichlet boundary conditions. 
In particular
\begin{align}\label{eqn:1dun011}
(\kappa(x,u) u', v') = (f,v), \tforall v \in \cV_0.
\end{align}
Then under the condition 
\begin{align}\label{eqn:1dun006a}
\max_{0 \le k \le n-1} |u(a_k) - u(a_{k+1})| < \f{2 k_\alpha}{L_0},
\end{align}
({\em cf.,} \eqref{eqn:1dun003}), the linear FE solution $u$
is the unique solution to \eqref{eqn:1dun011}.
\end{corollary}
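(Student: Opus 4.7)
The plan is to derive the uniqueness as an immediate consequence of Theorem \ref{thm:1dcompare}, invoked twice with opposite roles. Suppose $\tilde u \in \cV_0$ is any solution to \eqref{eqn:1dun011} sharing the homogeneous Dirichlet data of $u$; the goal is to conclude $\tilde u = u$ on $[a,b]$.

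First I would apply Theorem \ref{thm:1dcompare} with $f_1 = f_2 = f$, $u_1 = \tilde u$, and $u_2 = u$. The inequality $f_1 \le f_2$ is trivial, and the variation hypothesis \eqref{eqn:1dun003} on $u_2 = u$ is exactly the corollary's assumption \eqref{eqn:1dun006a}. The theorem then immediately yields $\tilde u \le u$ on $\Omega$.

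The harder step is the reverse inequality $u \le \tilde u$, and this is where the main obstacle appears: Theorem \ref{thm:1dcompare} as stated places the variation requirement on $u_2$, so a naive swap would demand a bound on $\tilde u$, which is not part of the corollary's hypotheses. I would resolve this by noting that the identity underlying \eqref{eqn:1dun005} is symmetric in $u_1,u_2$ when $f_1=f_2$: adding and subtracting $\kappa(x,u_2)u_1'$ instead of $\kappa(x,u_1)u_2'$ gives
\[
(\kappa(x,u_2)(u_1 - u_2)', v') = \bigl((\kappa(x,u_2) - \kappa(x,u_1)) u_1', v'\bigr),
\]
after which the estimates \eqref{eqn:1dun007}--\eqref{eqn:1dun010} carry over verbatim, the Lipschitz step now being applied to the $u_1'$ factor so that the variation bound is placed on $u_1$ instead of $u_2$. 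Applied with $u_1 = u$ and $u_2 = \tilde u$, this mirrored form of Theorem \ref{thm:1dcompare} produces $u \le \tilde u$, again relying only on \eqref{eqn:1dun006a}.

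Combining the two comparisons yields $u = \tilde u$, proving uniqueness. The only real conceptual content beyond a mechanical restatement of Theorem \ref{thm:1dcompare} is the observation that, once the two forcings coincide, the variation hypothesis need only be verified on the \emph{reference} solution $u$; the mirrored decomposition removes any need to impose conditions on a hypothetical competitor $\tilde u$.
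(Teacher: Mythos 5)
Your argument is correct, and it follows the paper's overall strategy (two applications of the comparison theorem with the roles of the solutions exchanged), but it adds a genuinely necessary ingredient that the paper's one-line proof omits. The paper's proof simply says: set $f_1=f_2=f$, take two solutions $u_1,u_2$, and conclude both $u_1\le u_2$ and $u_2\le u_1$ from Theorem \ref{thm:1dcompare}. This glosses over exactly the asymmetry you flag: the theorem's hypothesis \eqref{eqn:1dun003} is a bound on the variation of $u_2$ only, while the corollary assumes \eqref{eqn:1dun006a} only for the distinguished solution $u$, so the direction $u\le\tilde u$ is not literally an instance of the theorem as stated. Your mirrored decomposition, $\kappa(x,u_1)u_1'-\kappa(x,u_2)u_2'=\kappa(x,u_2)(u_1-u_2)'+(\kappa(x,u_1)-\kappa(x,u_2))u_1'$, repairs this cleanly: the left-hand-side estimate \eqref{eqn:1dun007} uses only the lower bound $k_\alpha$, which applies equally to $\kappa(x,u_2)$, and the Lipschitz bound $|\kappa(x,u_2)-\kappa(x,u_1)|\le L_0|w|$ is symmetric, so the chain \eqref{eqn:1dun007}--\eqref{eqn:1dun010} goes through verbatim with the variation factor now attached to $u_1$. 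Applied with $u_1=u$ and $u_2=\tilde u$, this yields $u\le\tilde u$ under the hypothesis on $u$ alone. In short, your write-up proves the corollary exactly as stated, whereas the paper's proof is only airtight if one either tacitly imposes the variation bound on every candidate solution or supplies the symmetric variant you describe.
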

\begin{proof}
Let $f_1 = f_2 = f$ in Theorem ~\ref{thm:1dcompare}. Then, let $u_1$ and $u_2$ be
two respective solutions to the problem \eqref{eqn:1dun011}. Applying the results
of Theorem \ref{thm:1dcompare} yields both $u_1 \le u_2$ and $u_2 \le u_1$.  
\end{proof}
Notably, the argument does not
depend on the global meshsize, rather the variation in the discrete 
piecewise linear solution over each element. As per the following remark,
this can be rephrased as a condition involving the local meshsize.

\begin{remark} The condition \eqref{eqn:1dun006a} is local, and can be checked
computationally. 
These two qualities make it amenable for use in an  adaptive 
algorithm to assure uniqueness of the discrete solution.
The condition can equivalently be written as
\begin{align*}
|u_2'|_{\cI_k} < \f{2 k_\alpha}{h_k L_0}, \tforall k = 1, \ldots n.
\end{align*}
In other words, the mesh should be {\em locally} fine, where the slope 
of the solution is steep.
\end{remark}

The next corollary generalizes Theorem~\ref{thm:1dcompare} 
to the case of mixed Dirichlet-Neumann boundary conditions.
\begin{corollary}[Comparison Theorem for Mixed Boundary Conditions in 1D]
\label{cor:compare1d_mixed}
Assume $f_1, f_2 \in L_2(\Omega)$ satisfy $f_1 \le f_2$ in $\Omega$. 
Define $\cV_{0,b}$ as the piecewise linear Lagrange finite element space subordinate
to subdivision ~\eqref{eqn:subdivision1D}, with $v(b) = 0$ for all $v \in \cV_{0,b}$.
Let $u_i \in \cV_{0,b}, ~ i = 1,2$ denote the solution to the respective weak forms 
of \eqref{eqn:strongPDE_1D} with the mixed boundary conditions
\begin{align}\label{eqn:mixedBCa_1D}
u(b) = 0, ~\an \kappa(a,u(a))u'(a) = \psi_a,
\end{align}
for a given $\psi_a \in \R$.
In particular
\begin{align}\label{eqn:weak_mixedBCa_1D}
(\kappa(x,u_i) u_i',v') = (f_i,v) + \psi_a v(a), \tforall v \in \cV_{0,b}, 
\quad i = 1,2.
\end{align}
Define the intervals $\cI_k$, and their respective lengths $h_k$ as in
Theorem \ref{thm:1dcompare}, for $k = 1, \ldots, n$.  Then under the condition
\eqref{eqn:1dun003}, it holds that $u_1 \le u_2$.
\end{corollary}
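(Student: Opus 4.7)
The plan is to mirror the proof of Theorem~\ref{thm:1dcompare}, with two modifications arising from the mixed boundary condition: the test function is now permitted to be nonzero at the Neumann endpoint $a$, and the prescribed flux $\psi_a$ contributes a boundary term on each side of \eqref{eqn:weak_mixedBCa_1D}. I would begin by subtracting the two weak forms, noting that the boundary contributions $\psi_a v(a)$ cancel identically since $\psi_a$ is the same in both problems. This yields for every $v \in \cV_{0,b}$,
\[
(\kappa(x,u_1)(u_1-u_2)',v') = \big((\kappa(x,u_2)-\kappa(x,u_1))u_2',v'\big) + (f_1-f_2,v),
\]
which is exactly the identity \eqref{eqn:1dun005} driving the earlier proof. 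In particular the structure of the argument is preserved and the rest is simply a matter of building an admissible test function in $\cV_{0,b}$.

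Arguing by contradiction, suppose $w := u_1 - u_2 > 0$ somewhere in $[a,b]$. Let $i$ denote the smallest index with $w(a_i) > 0$, now possibly $i=0$, and let $j$ be the smallest index with $j \ge i$ and $w(a_{j+1}) \le 0$. Such a $j$ exists since $w(a_n) = u_1(b) - u_2(b) = 0$. Define $v$ by its nodal values as in \eqref{eqn:1dun006}: $v(a_k)=1$ for $k=i,\ldots,j$ and $v(a_k)=0$ otherwise. Because $v(a_n)=0$, this $v$ belongs to $\cV_{0,b}$ regardless of whether $i=0$ or $i\ge 1$; no constraint is imposed at the Neumann endpoint $a$.

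If $i \ge 1$ the proof proceeds verbatim as in Theorem~\ref{thm:1dcompare}, producing a contradiction under \eqref{eqn:1dun003}. The only genuinely new case is $i=0$, in which $v(a_0)=1$ and $v'$ is supported only on $\cI_{j+1}$, there being no left interval at $a$. Applying ellipticity as in \eqref{eqn:1dun007} and the Lipschitz bound combined with \eqref{eqn:1dun008a} as in \eqref{eqn:1dun008b}, but now with only the single-interval contributions, I would obtain
\[
\f{1}{h_{j+1}}\left\{k_\alpha - \f{L_0}{2}|u_2(a_{j+1}) - u_2(a_j)|\right\}(w(a_j)-w(a_{j+1})) \le (f_1-f_2,v).
\]
Under \eqref{eqn:1dun003} the bracketed factor is strictly positive, and since $w(a_j)-w(a_{j+1}) > 0$ while $(f_1-f_2,v) \le 0$, a contradiction ensues.

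The main obstacle is essentially bookkeeping: verifying that the $\psi_a v(a)$ boundary terms cancel so that no flux data enters the central inequality, and handling the degenerate case $i=0$ where the positive component of $w$ touches the Neumann endpoint and the test function $v$ has only one jump in slope instead of two. Once these two items are addressed, the algebraic core of the contradiction is identical to that of Theorem~\ref{thm:1dcompare}, and the conclusion $u_1 \le u_2$ follows.
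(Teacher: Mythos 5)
Your proposal is correct and follows essentially the same route as the paper: the $\psi_a v(a)$ terms cancel upon subtraction, and the argument splits into the interior-segment case (handled verbatim as in Theorem~\ref{thm:1dcompare}) and the case where the positive nodes of $w$ reach the Neumann endpoint, where only the single interval $\cI_{j+1}$ contributes and you recover exactly the paper's inequality \eqref{eqn:1dmun005}. The only cosmetic difference is that you organize the cases by whether the first positive node is $a_0$, while the paper phrases its second case as the positive nodes forming an initial segment; these are equivalent.
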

The proof follows again by construction of an appropriate test function $v$.
\begin{proof}
By \eqref{eqn:weak_mixedBCa_1D}, and subtracting the cases $i = 1$ and $i=2$,
\begin{align}\label{eqn:1dmun001}
(\kappa(x,u_1) (u_1 - u_2)', v')   
  = \left((\kappa(x,u_2) - \kappa(x,u_1)) u_2', v' \right)
  +(f_1 - f_2, v),  \tforall v \in \cV_{0,b}.
\end{align}
As before, we define $w = u_1 - u_2$ and prove the result by contradiction. Supposing that $w(x) > 0$ anywhere on $a \le x \le b$, 
two cases are now possible. In the first case, 
there is a least index $i < n$ with $w(a_{i-1}) \le 0$ and $w(a_{i}) > 0$. Then, because
$w(b) = 0$, there is certain to be a least index $j \ge i$ for which $w(a_j)>0$ and 
$w(a_{j+1} ) \le 0$. The proof then follows that of Theorem \ref{thm:1dcompare},
with $v$ chosen by \eqref{eqn:1dun006}. 

The second case is characterized by $w(a_k) > 0$ for all $k \le j$, for some $j \le n-1$.
In particular, there is some smallest index $j < n$ such that $w(a_j)>0$, and
$w(a_{k}) \le 0$ for all $k = j+1, \ldots, n$. 
It remains then to establish the result assuming this is the case.
Define the test function $v \in \cV_{0,b}$ by 
\begin{align}\label{eqn:1dmun002}
\left\{ \begin{array}{ll}
v(a_k) = 1, & k = 0, \ldots, j, \\
v(a_k) = 0, & k = j+1, \ldots, n.
\end{array}\right.
\end{align} 
Noting that $v' = -1/h_{j+1}$ on $\cI_{j+1}$, and $v' = 0$, otherwise, the argument 
follows as before with the following estimates.
Inequality \eqref{eqn:1dun007} reduces to
\begin{align}\label{eqn:1dmun003}
(\kappa(x,u_1)w',v') 
   = \int_{\cI_{j+1}} \kappa(x,u_2)\f{w(a_j) - w(a_{j+1})}{h_{j+1}^2}
   \ge k_\alpha \left\{ \f{w(a_j) - w(a_{j+1})}{h_{j+1}} \right\}.
\end{align}
Inequality \eqref{eqn:1dun008} reduces to
\begin{align}\label{eqn:1dmun004}
((\kappa(x,u_2) - \kappa(x,u_1))u_2',v') 
  & \le \f{L_0}{2 h_{j+1}} (w(a_j) - w(a_{j+1}))|u_2(a_j) - u_2(a_{j+1})|.
\end{align}
Putting together \eqref{eqn:1dmun003} and \eqref{eqn:1dmun004} with \eqref{eqn:1dmun002},
and rearranging terms, yields
\begin{align}\label{eqn:1dmun005}
\f{1}{h_{j+1}} \left\{k_\alpha - \f{L_0}{2}|u_2(a_{j+1}) - u_2(a_j)| \right\}
  (w(a_j) - w(a_{j+1})) \le (f_1 - f_2,v).
\end{align}
Again noting the strict positivity of $(w(a_{j}) - w(a_{j+1}))$, a contradiction 
is encountered under the satisfaction of the condition \eqref{eqn:1dun003}, 
from which the conclusion follows.
\end{proof}
As in the case of homogeneous Dirichlet boundary conditions, the main 
purpose of Corollary \ref{cor:compare1d_mixed} is to establish the uniqueness of the 
piecewise linear FE solution under conditions suitable for adaptive mesh refinement.
The next corollary for the case of mixed boundary conditions establishes the analogous 
result to Corollary \ref{cor:1d_unique}.
\begin{corollary}[Uniqueness of the FE Solution under Mixed BC in 1D]
\label{cor:unique1d_mixed}
Let $f \in L_2(\Omega)$, and let $u \in \cV_{0,b}$ denote a solution to the weak form
of \eqref{eqn:strongPDE_1D}, with the mixed boundary conditions \eqref{eqn:mixedBCa_1D}.
In particular
\begin{align}\label{eqn:1dmun006}
(\kappa(x,u)u',v') = (f,v) + \psi_a v(a), \tforall v \in \cV_{0,b}.
\end{align}
Then, under the condition \eqref{eqn:1dun006a}, 
the linear FE solution $u$ is the 
unique solution to \eqref{eqn:1dmun006}.
\end{corollary}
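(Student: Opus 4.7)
The plan is to mirror the argument used in Corollary~\ref{cor:1d_unique} essentially verbatim, but with the mixed-boundary comparison result of Corollary~\ref{cor:compare1d_mixed} playing the role previously played by Theorem~\ref{thm:1dcompare}. No new test functions or estimates are required, since all the element-by-element work has been done in proving the comparison theorem.

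Concretely, I would suppose that $u_1, u_2 \in \cV_{0,b}$ are two solutions of \eqref{eqn:1dmun006} with the same data $f \in L_2(\Omega)$ and $\psi_a \in \R$, and set $f_1 = f_2 = f$ in Corollary~\ref{cor:compare1d_mixed}. The trivial ordering $f_1 \le f_2$ holds, and the variational identity \eqref{eqn:weak_mixedBCa_1D} is satisfied by both $u_1$ and $u_2$ with this common right-hand side. Reading the hypothesis \eqref{eqn:1dun006a} of the present corollary as being in force for whichever solution is placed in the $u_2$ slot of the comparison, the local bound \eqref{eqn:1dun003} is supplied, and Corollary~\ref{cor:compare1d_mixed} yields $u_1 \le u_2$.

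By the symmetry of the setup, exchanging the roles of $u_1$ and $u_2$ in the application of Corollary~\ref{cor:compare1d_mixed} produces the reverse inequality $u_2 \le u_1$. Combining the two gives $u_1 = u_2$, which establishes that the solution is unique under \eqref{eqn:1dun006a}.

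I do not anticipate any serious obstacle, as this is strictly a corollary of the already-proven mixed-boundary comparison principle; the reasoning is parallel, line for line, to the Dirichlet case in Corollary~\ref{cor:1d_unique}. The only minor subtlety, shared with that earlier corollary, is the implicit convention that the hypothesis \eqref{eqn:1dun006a} governs both candidate solutions when they are compared in either order, which is innocuous given that the ultimate conclusion is their equality.
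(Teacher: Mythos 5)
Your proposal is correct and matches the paper's own proof essentially verbatim: set $f_1 = f_2 = f$ in Corollary~\ref{cor:compare1d_mixed}, apply it in both orders to obtain $u_1 \le u_2$ and $u_2 \le u_1$, and conclude equality. The subtlety you flag about condition \eqref{eqn:1dun006a} governing whichever solution occupies the $u_2$ slot is the same implicit convention the paper uses in Corollary~\ref{cor:1d_unique}, so nothing further is needed.
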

\begin{proof}
Let $f_1 = f_2 = f$ in Corollary \ref{cor:compare1d_mixed}. Then let $u_1$ and $u_2$
be two respective solutions to \eqref{eqn:1dmun006}. Applying the results of Corollary
~\eqref{cor:compare1d_mixed} yields the result.
\end{proof}
\begin{remark}
Corollaries \ref{cor:compare1d_mixed} and \ref{cor:unique1d_mixed} trivially generalize
to the mixed boundary value problem with a homogeneous Dirichlet condition at
$x = a$ and a Neumann condition at $x=b$. 
\end{remark}
\subsection{Addressing the counterexample of Andr\'e and Chipot}
\label{subsec:1Dcounter}
The analysis of \cite{AnCh96a} not only established the uniqueness of the 
finite element solution in one and two dimensions under the sufficient condition
of a globally small meshsize, but also provided a counterexample. In this example,
a diffusion coefficient $\kappa(x,u)$ for which $-(\kappa(x,u)u')' = f$ has two 
discrete solutions, $u$ and $2u$, is constructed on a set partition.  
Here, we show the counterexample
in question violates the condition \eqref{eqn:1dun006a}. 
We refer the reader to Section 2 of \cite{AnCh96a} for the complete details of the
example, and explain here how the local condition \eqref{eqn:1dun006a} detects
the possibility of nonuniqueness. 

The diffusion coefficient $\kappa(x,z)$ is constructed to satisfy 
$1/3 \le \kappa(x,z) \le 1$, for all $z \in \R$ and $\ale x \in [0,1]$.  That is, 
\eqref{eqn:A1:elliptic} is satisfied with $k_\alpha = 1/3$.
The coefficient $\kappa(x,z)$ is defined in terms of a function 
$\phi(t) \in C^\infty(0,1)$ which satisfies
\begin{align}\label{eqn:1Dcounter001}
\phi(0) = \phi(1) = 1, \quad \phi'(0) = \phi'(1) = 0, 
\quad 1/3 \le \phi(t) \le 1, \an
\int_0^1 \phi(t) = 1/2.
\end{align}
A first solution $u$ defined on a uniform partition corresponding to 
the notation of \eqref{eqn:subdivision1D}, is assumed to satisfy $u(0) = u(1) = 0$, and $u(a_i) = u_i > 0$, $ i = 1, \ldots, n-1$. 
As demonstrated in equation (2.38) in \cite{AnCh96a}, on the first interval, the
derivative of $\kappa(x,z)$ with respect to its second argument satisfies 
\begin{align}\label{eqn:1Dcounter002}
\left|\f{\pa }{\pa z} \kappa(x,z)  \right| = \left|\f{\phi(t)-\phi(0)}{t u_1} \right|,
\quad a_0 \le x \le a_1, \quad u(x) \le z(x) \le 2u(x).
\end{align}
However, the properties of $\phi$ given by \eqref{eqn:1Dcounter001} require that
the magnitude of the slope of the secant line,
$|\phi(0) - \phi(t)|/t > 2/3$ for some $t \in(a_0,a_1]$, for $z(x)$ in the given range.
This is seen by comparison against the slope of the linear function 
$\psi_0(t)$ given by $\psi_0(0) = 1$ and $\psi_0(1) = 1/3$.
This implies a lower bound on the Lipschitz constant, 
namely $L_0 > (2/3)/u_1$.
The local condition for uniqueness in the present discussion given by
\eqref{eqn:1dun006a}, requires on the first element of the partition, 
$(u_1 - 0) < 2 k_\alpha/L$. Applying $k_\alpha=1/3$ and $L_0>(2/3)/u_1$, 
the condition for uniqueness on the first element requires
\begin{align}\label{eqn:1Dcounter003}
u_1 < \f{2 k_\alpha}{L_0} < \f{(2/3)u_1}{(2/3)} = u_1.
\end{align}
As such, the local condition \eqref{eqn:1dun006a} suffices to detect the possibility
of a nonunique solution.  
\begin{remark}
It is further noted that if \eqref{eqn:1Dcounter001} is modified so that
$k \le \phi(t) \le 1$, for any $0 < k < 1/3$, 
then $k_\alpha = k$, and $L_0 > (1-k)/u_1$, by the same reasoning as above.  
Then in place of
\eqref{eqn:1Dcounter003} we have $u_1 < 2k u_1/(1-k) < u_1$, violating the 
condition for uniqueness. More generally, for $0 < k < 1/2$,  the magnitude of the 
slope of the secant line from $(0,\phi(0))$ to $(t,\phi(t))$ must be at least
$|\phi(0) - \phi(t)|/t > (1-k)^2/(1-2k)$ for some 
$t \in (0,1)$ in order to simultaneously satisfy the 
first and last conditions of \eqref{eqn:1Dcounter001}. 
This is found by considering the area below the curve
$\psi(t) = \max\{k, 1-st\}$ from $t = 0$ to $t= 1$ with $s$ defined so the area $\int_0^1 \psi(t)=1/2$, 
namely, $s = -(1-k)^2/(1-2k)$.
Then $L_0 > |s|/u_1$ and  
$2k/L_0 < u_1 \cdot 2k(1-2k)/(1-k^2) < u_1$. 

\end{remark}
\section{Uniqueness in the two-dimensional case}
\label{sec:2D}
Let $\Omega \subset \R^2$ be an open bounded polyhedral domain, with boundary 
$\pa \Omega = \Gamma_D \cup \Gamma_N$, where $\Gamma_D$ has positive measure, and
$\Gamma_N = \pa \Omega \setminus \Gamma_D$.
Homogeneous Dirichlet boundary conditions will be imposed on $\Gamma_D$, and
Neumann conditions will be imposed on $\Gamma_N$.
We next establish the uniqueness of the piecewise linear finite element solution
to the discrete approximation corresponding to \eqref{eqn:PDE_strong},
in two dimensions, under the assumptions \eqref{A1:elliptic} and \eqref{A1:Lipschitz}. 
For simplicity of defining the finite element solution space, the discussion assumes a 
mixed boundary condition, with a homogeneous Dirichlet part.
However, the method of the proof trivially generalizes to allow nonhomogeneous 
Dirichlet data, or pure Dirichlet conditions, where $\Gamma_D = \pa \Omega$.

Let $\cT$ be a conforming triangulation of domain $\Omega$ by triangles that 
exactly captures the boundary $\pa \Omega$, and each of $\Gamma_D$ and $\Gamma_N$.  The mesh is assumed to satisfy the following acuteness condition, with the smallest angle to be bounded away from zero.
\begin{assumption}[Mesh regularity]
\label{A2:meshregularity}
There are numbers $ 0 < t_{min}< t_{max}$,  
for which the interior angles $\theta_i, ~i = 1,2,3$, 
of each $T \in \cT$ satisfy
\begin{align}\label{eqn:meshregularity}
t_{min} \le \theta_i \le t_{max} < \pi/ 2 , ~ i = 1, 2, 3.
\end{align}
\end{assumption}
Similar assumption on the angles of the mesh are made in
\cite{AnCh96a} (Section 4.).
The acuteness of each triangle disallows the orthogonality of the
gradients of the standard finite element basis functions. 
The smallest-angle assumption preserves the condition of the mesh, 
and necessarily for the discussion that follows, the minimum ratio of edges in a 
given triangle.  To that end, for each $T \in \cT$, denote the minimum ratio of 
sines of interior angles by
\begin{align}\label{eqn:gammadef}
\gamma_T & \coloneqq \min_{i, j \in \{1,2,3\}} \f{\sin(\theta_{i})}{\sin(\theta_{j})},
\text{ for } \theta_i, ~ i = 1,2,3, ~\text{ the angles of } T, 
\\ \label{eqn:cTdof}
     c_T & \coloneqq \min_{i = 1,2,3} \cos(\theta_i).
\end{align}
Under Assumption \ref{A2:meshregularity}, $\sin(t_{min}) \le \gamma_T \le 1$,
and $0<c_T <1$, for each $T \in \cT$.

Let $\cD$ be the collection of vertices or nodes of partition $\cT$, where
each $d \in \cD$ has coordinates $d = (x,y) \in \overline \Omega$.  
Define the discrete space $\cV_{0,D}$ as space of the piecewise linear
functions subordinate to partition $\cT$, that vanish on $\Gamma_D$. 
Without confusion, the following Section \ref{subsec:2Dsetup} 
represents $a = (x,y) \in \Omega$, as a coordinate representation, 
whereas the remainder of the text follows the convention $ x \in \Omega$ 
as a point in the domain with two associated physical coordinates.

\subsection{Basic setup in two dimensions}
\label{subsec:2Dsetup}
Some standard notations for the two dimensional problem are first reviewed, for 
the ease of presentation to the reader.
Let $\{a_1, a_2, a_3 \}$ $=\{(x_1,y_1), (x_2,y_2), (x_3,y_3) \}$ 
be a local counterclockwise numbering of the vertices of a simplex 
$T \in \cT$.  Let the corresponding edges $\{e_1, e_2, e_3 \}$, follow
a consistent local numbering, namely edge $e_i$ is opposite vertex 
$a_i, ~ i = 1, 2, 3$.
Let $\varphi_i$ the basis function on element $T \in \cT$ defined by its nodal
values at the vertices of $T$.
\begin{align*}
\varphi_i(a_j) 
= \left\{ \begin{array}{ll}
  1, & i = j, \\
  0, & i \ne j
  \end{array}\right., ~i,j = 1, 2, 3.
\end{align*}

The inner product between gradients of basis functions, and their respective 
integrals over elements $T \in \cT$, may be
computed by change of variables to a reference element $\widehat T$, in reference 
domain variables $(\widehat x, \widehat y)$.
Specifically, the coordinates of $\widehat T$ are given as 
$(\widehat x_1, \widehat y_1) = (0,0)$,
$(\widehat x_2, \widehat y_2)= (1, 0)$,
$(\widehat x_3, \widehat y_3)= (0,1)$.
The Jacobian of the transformation between reference and physical coordinates, 
and the corresponding Jacobian $J$ are given by
\begin{align}\label{eqn:jacobian}
J \left(\begin{array}{rr}\widehat x  \\ \widehat y \end{array} \right)= 
\left(\begin{array}{rr}x - x_1 \\ y - y_1\end{array} \right) , \quad
J = \left( \begin{array}{rrr}
    x_2 - x_1 & x_3 - x_1 \\
    y_2 - y_1 & y_3 - y_1
    \end{array}\right),
\end{align}
with $\det J = 2|T|$, where $|T|$ is the area of triangle $T$.
The reference element $\widehat T$ is equipped 
with the nodal basis functions $\widehat \varphi_i, ~i = 1, 2, 3.$, where
\begin{align*}
\widehat \varphi_1 = 1 - \widehat x - \widehat y, \quad 
\widehat \varphi_2 = \widehat x, \quad
\widehat \varphi_3 = \widehat y, 
\end{align*}
and gradients $\widehat \grad$ taken with respect to the reference domain variables
$\widehat x$ and $\widehat y$
\begin{align*}
\widehat \grad \widehat \varphi_1 = \left(\begin{array}{r}-1\\-1 \end{array} \right), 
\quad 
\widehat \grad \widehat \varphi_2 = \left(\begin{array}{r} 1\\0 \end{array} \right), 
\quad
\widehat \grad \widehat \varphi_3 = \left(\begin{array}{r} 0\\1 \end{array} \right). 
\end{align*}
Then $\grad \varphi_i = J^{-T} \widehat \grad \widehat \varphi_i$ 
and it is useful to note that
\begin{align}\label{eqn:grad_id}
\grad \varphi_i + \grad \varphi_j = -\grad \varphi_k,
\end{align} 
for any distinct assignment of $i,j$ and $k$ to the integers $\{1,2,3\}$.
Multiplication by the inverse of the Jacobian \eqref{eqn:jacobian}, allows 
the representation of $\grad \varphi_i^T \grad \varphi_i$ in terms of edge-length $|e_i|$.
The identity \eqref{eqn:grad_id} allows the expansion
$\grad \varphi_k^T \grad \varphi_k = 
 \grad(\varphi_i + \varphi_j)^T \grad(\varphi_i + \varphi_j)$,
and the representation of the inner product $\grad \varphi_i^T \grad \varphi_j$ 
in terms of the three edge-lengths of associated triangle $T$. 
\begin{align}\label{eqn:gradTgrad}
\grad \varphi_i^T \grad \varphi_j  
&= \f {1}{|\det J|^2}\left\{ \begin{array}{ll}
   |e_j|^2,                            & i = j, \\
  (|e_k|^2 - |e_i|^2 - |e_j|^2)/2 , & i \ne j.
  \end{array}\right.
\end{align}
The maximum interior angle of $\pi/2$ from Assumption \ref{A2:meshregularity},
together with \eqref{eqn:gradTgrad} assures the nonpositivity of any
$\grad \varphi_i^T \grad \varphi_j$, for $i \ne j$.
Further, the integral of each inner product over element $T$ in the physical domain 
then satisfies the following.
\begin{align} \label{eqn:int_gradTgrad}
\int_T \grad \varphi_i^T \grad \varphi_i =   \f 1 {4|T|} |e_i|^2, ~\an~
\int_{T}\nabla \phi_{i}^{T}\nabla \phi_{j} = -\f{|e_i||e_j|}{4|T|}\cos \theta_{k}. 
\end{align}
The minimum and maximum interior angle conditions in Assumption \ref{A2:meshregularity},
together with the law of sines, allows a bound on 
the minimum and maximum ratio of edges in a triangle in terms of $\gamma_T$,
defined in \eqref{eqn:gammadef}. For each $T \in \cT$
it holds that
\begin{align}\label{eqn:edge_rat}
\gamma_T|e_{j}| \le |e_i| \le \gamma_T^{-1} |e_j|, ~i,j = 1,2,3.
\end{align}
\subsection{Uniqueness in two dimensions}
\label{subsec:2Dunique}
We next prove two technical lemmas to support the comparison theorem in two
dimesions.  In contrast to the case of one dimension, both the cases of Dirichlet
and mixed Dirichlet/Neumann boundary conditions are handled together. This is due
in part to the additional complexities of the mesh partition in two dimensions, 
particularly that a given vertex may belong to many triangles $T \in \cT$, rather
than only two.  Because of this, we consider a test function supported on all triangles
in the domain over which the difference of solutions is positive at any
vertex. In Lemma \ref{lemma:T1}, we establish the necessary
estimates for Theorem \ref{thm:2dcompare} on any triangle where the difference 
of solutions, denoted $w$, is positive on exactly one vertex; 
in Lemma \ref{lemma:T2}, we establish the analogous estimate for the case of two 
positive vertices.

\begin{lemma}\label{lemma:T1}
Let Assumptions \ref{A1:elliptic} and \ref{A1:Lipschitz} hold, and 
let $u_1, u_2, w, v \in \cV_{0,D}$, subordinate to some fixed 
$\cT$ and  $\Gamma_D$. 
Suppose there is some triangle $T \in \cT$ for which 
$w(x)$ is positive at one vertex, and nonpositive at the other two. 
Assign the set of indices $\cN_T = \{i,j,k\}$ to take
the values $\{1,2,3\}$ so that $w(a_i) > 0$, and $0 \ge w(a_j) \ge w(a_k)$.  
Define the test function $v(x)$ by it's nodal values by $v(a_i) = 1$, 
and $v(a) = 0$ at all other mesh vertices $a \in \cD$.
Then the following inequalities hold.
\begin{align}\label{eqn:L1_b1}
\int_T \kappa(x,u_1) \grad w^T  \grad v 
   & \ge 
(w(a_i) - w(a_k)) \f{|e_i||e_k|}{4|T|} (k_\alpha \gamma_T c_T),
\end{align}
and
\begin{align}\label{eqn:L1_b2}
 \int_T (\kappa(x,u_2) - \kappa(x,u_1)) \grad u_2^T \grad v
 & \le (w(a_i) - w(a_k)) \f{|e_i||e_k|}{4|T|}\f{7L_0}{6} \left(1 + \gamma_T^{-1} \right)
   \nonumber \\ 
   & \times \max_{i',j'}|u_2(a_{i'}) - u_2(a_{j'})|. 
\end{align}
\end{lemma}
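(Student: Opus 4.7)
The test function $v\in\cV_{0,D}$ satisfies $v(a_i)=1$ and vanishes at every other mesh node, so on $T$ we have $v|_T = \varphi_i$ and $\nabla v = \nabla\varphi_i$. The workhorse is identity \eqref{eqn:grad_id}, which lets me replace $\nabla\varphi_i$ everywhere in terms of the two off-diagonal cross products $\nabla\varphi_i^T\nabla\varphi_j$ and $\nabla\varphi_i^T\nabla\varphi_k$; by Assumption \ref{A2:meshregularity} together with \eqref{eqn:gradTgrad}, each such cross product is non-positive, with the explicit edge/angle representation given by \eqref{eqn:int_gradTgrad}. For \eqref{eqn:L1_b1}, I would expand
\[
\nabla w^T\nabla v = -(w(a_i)-w(a_j))\,\nabla\varphi_i^T\nabla\varphi_j \;-\;(w(a_i)-w(a_k))\,\nabla\varphi_i^T\nabla\varphi_k.
\]
The ordering $w(a_i)>0\geq w(a_j)\geq w(a_k)$ combined with the sign of the cross products makes both summands nonnegative. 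Since this quantity is constant on $T$, I pull it out of the integral, use $\int_T\kappa\geq k_\alpha|T|$ from Assumption \ref{A1:elliptic}, discard the nonnegative $(i,j)$ contribution, and invoke \eqref{eqn:int_gradTgrad} to obtain $k_\alpha(w(a_i)-w(a_k))|e_i||e_k|\cos\theta_j/(4|T|)$. The claim then follows from $\cos\theta_j\geq c_T\geq\gamma_T c_T$, valid because $\gamma_T\leq 1$.

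For \eqref{eqn:L1_b2}, the same expansion gives
\[
\nabla u_2^T\nabla v = (u_2(a_j)-u_2(a_i))\,\nabla\varphi_j^T\nabla\varphi_i + (u_2(a_k)-u_2(a_i))\,\nabla\varphi_k^T\nabla\varphi_i,
\]
but now the nodal differences of $u_2$ carry no controlled sign, so I take absolute values, factor out $\max_{i',j'}|u_2(a_{i'})-u_2(a_{j'})|$, apply \eqref{eqn:int_gradTgrad} with $\cos\leq 1$, and convert $|e_j|$ into $|e_k|$ via \eqref{eqn:edge_rat}, producing the geometric factor $|e_i||e_k|(1+\gamma_T^{-1})/(4|T|^2)$. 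Since $\nabla u_2^T\nabla v$ is piecewise constant on $T$, the Lipschitz property \eqref{eqn:A1:Lipschitz} yields
\[
\left|\int_T(\kappa(x,u_2)-\kappa(x,u_1))\nabla u_2^T\nabla v\right| \le L_0\,|\nabla u_2^T\nabla v|\int_T|w|,
\]
and I estimate $\int_T|w|$ by a multiple of $|T|(w(a_i)-w(a_k))$ using the nodal-interpolant bound $\int_T|w|\leq |T|(|w(a_i)|+|w(a_j)|+|w(a_k)|)/3$ together with the signs and ordering of the nodal values of $w$.

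The chief obstacle is pinning down the numerical constants. The geometric factor $(1+\gamma_T^{-1})$ requires using $\cos\theta\leq 1$ at just the right place rather than the sharper identity $|e_i|=|e_j|\cos\theta_k+|e_k|\cos\theta_j$; and the specific coefficient $7L_0/6$ is delicate, since the crude bound $\int_T|w|\leq|T|(w(a_i)-w(a_k))$ produces $L_0$, while combining the interpolation bound with $-w(a_j)\leq-w(a_k)\leq w(a_i)-w(a_k)$ produces $2L_0/3$. Matching $7/6$ exactly will most likely demand a sharper $L^1$ estimate obtained by locating the zero level set of $w$ on $T$ in terms of the ratios $w(a_i)/(w(a_i)-w(a_j))$ and $w(a_i)/(w(a_i)-w(a_k))$ and integrating the positive and negative parts explicitly.
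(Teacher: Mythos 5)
Your proposal is correct and follows essentially the same route as the paper: expand $w$ and $u_2$ in the local nodal basis on $T$, exploit the identity \eqref{eqn:grad_id} together with the nonpositivity of the off-diagonal gradient products guaranteed by the acuteness condition to get the sign and edge--angle representations, and close the second estimate by combining the Lipschitz bound with an $L^1(T)$ estimate of $w$ in terms of $w(a_i)-w(a_k)$. Your closing concern about reproducing the constant $7/6$ exactly is moot: \eqref{eqn:L1_b2} is an upper bound, so any estimate $\int_T |w| \le C\,|T|\,(w(a_i)-w(a_k))$ with $C \le 7/6$ suffices, and both of your candidates (the crude pointwise bound giving $C=1$, or the interpolation argument giving $C=2/3$) are sharper than the paper's own $C=7/6$, which arises only from a looser splitting of $|w|$.
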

\begin{proof}
The test function $v$ is a basis function of $\cV_{0,D}$, namely
$v = \varphi_i$, with respect to the local numbering on $T$. 
The function $w$ may be expanded as a linear combination of 
basis functions $\varphi_i$, $\varphi_j$ and $\varphi_k$. 
An integral over $T$ on the left hand side
of \eqref{eqn:2dc_002a} satisfies
\begin{align}\label{eqn:2dc_003}
\int_T \kappa(x,u_1) \grad w^T  \grad v & = 
  \sum_{n \in \cN_T}
  w(a_n) \int_T \kappa(x,u_1) \grad \varphi_n^T \grad \varphi_i  
  \nonumber \\
& = w(a_i)\! \int_T\!\! \kappa(x,u_1)\grad \varphi_i^T  \grad \varphi_i  
  + w(a_j) \int_T\!\! \kappa(x,u_1)\grad (\varphi_j + \varphi_k)^T  \grad \varphi_i
  \nonumber \\ 
& + (w(a_k) - w(a_j)) \int_T \kappa(x,u_1)\grad \varphi_k^T \grad \varphi_i. 
\end{align}
Applying the identity \eqref{eqn:grad_id}, the 
first two terms of \eqref{eqn:2dc_003} may be combined.  The angle condition
\eqref{eqn:meshregularity} and the choice $w(a_k) \le w(a_j)$ assures the last term
is non-negative.  Together with the lower bound from \eqref{eqn:A1:elliptic} 
and integration by \eqref{eqn:int_gradTgrad} this yields
\begin{align}\label{eqn:2dc_004}
\int_T \kappa(x,u_1) \grad w^T  \grad v 
  & = \left( w(a_i) - w(a_j)\right)
  \int_T \kappa(x,u_1)\grad \varphi_i^T  \grad \varphi_i
  \nonumber \\ 
  & + \left( w(a_k) - w(a_j)\right)
  \int_T \kappa(x,u_1)\grad \varphi_k^T  \grad \varphi_i 
   \nonumber \\
   & \ge \f{k_\alpha c_T}{4|T|} \left\{(w(a_i) - w(a_j))|e_i|^2
    +   (w(a_j) - w(a_k))|e_i||e_k| \right\}. 
\end{align}
Controlling the minimum ratio of edges by $\gamma_T$, as in \eqref{eqn:edge_rat},
inequality \eqref{eqn:2dc_004} reduces to the first result, \eqref{eqn:L1_b1}.

To establish the second estimate, expand $u_2$ as a linear combination of
basis functions. Again applying $v = \varphi_i$ allows
\begin{align}\label{eqn:2dc_005}
\int_T  (\kappa(x,u_2) - \kappa(x,u_1)) \grad u_2^T \grad v
& = \sum_{n \in \cN_T }u_2(a_n) \int_{T}(\kappa(x,u_2) - \kappa(x,u_1)) 
  \grad \varphi_n^T \grad \varphi_i
  \nonumber \\
  & = u_2(a_i) \int_T (\kappa(x,u_2) - \kappa(x,u_1)) \grad \varphi_i^T 
     \grad \varphi_i
     \nonumber \\
  &  + u_2(a_j)
  \int_T (\kappa(x,u_2) - \kappa(x,u_1))\grad (\varphi_j + \varphi_k)^T \grad \varphi_i
  \nonumber \\
  &   + (u_2(a_k) - u_2(a_j))
  \int_T (\kappa(x,u_2) - \kappa(x,u_1))\grad \varphi_k^T  \grad \varphi_i.
\end{align}
Applying the identity
$\grad (\varphi_j + \varphi_k) = -\grad \varphi_i$, to \eqref{eqn:2dc_005}, obtain
\begin{align}\label{eqn:2dc_006}
 \int_T \!(\kappa(x,u_2) - \kappa(x,u_1)) \grad u_2^T \grad v
 & \le |u_2(a_i) - u_2(a_j)| 
    \left| \int_T \!(\kappa(x,u_2) - \kappa(x,u_1))\grad \varphi_i^T  
           \grad \varphi_i\right|
    \nonumber \\ 
  & + | u_2(a_j) - u_2(a_k)|
  \left| \int_T \!(\kappa(x,u_2) - \kappa(x,u_1)) \grad \varphi_k^T 
                                             \grad \varphi_i\right|.
\end{align}
Evaluating the constant inner product between basis functions by \eqref{eqn:gradTgrad}
and applying the Lipschitz condition \eqref{eqn:A1:Lipschitz}, 
inequality \eqref{eqn:2dc_006} reduces to
\begin{align}\label{eqn:2dc_006a}
 \int_T\!(\kappa(x,u_2) - \kappa(x,u_1)) \grad u_2^T \grad v
 & \le \f{L_0(|e_i|^2 + |e_i||e_k|)}{4|T|^2}\max_{i',j'}|u_2(a_{i'}) - u_2(a_{j'})| 
       \int_T |w|
 \nonumber \\
 & \le \f{L_0(|e_i||e_k|(1 + \gamma_T^{-1}))}{4|T|^2}
       \max_{i',j'}|u_2(a_{i'}) - u_2(a_{j'})| \int_T |w|,
\end{align}
where the last inequality follows from \eqref{eqn:edge_rat}.

To bound the integral of $|w|$, consider the following decomposition
making use of $\varphi_i + \varphi_j + \varphi_k = 1$, $w(a_i) \ge w(a_j)$ and 
$w(a_j) \ge w(a_k)$.
\begin{align*}
|w| &= |w(a_i) \varphi_i + w(a_j) \varphi_j + w(a_k) \varphi_k |
  \nonumber \\
    &= |(w(a_i)-w(a_j)) \varphi_i + (w(a_k) -w(a_j)) \varphi_k + 
         w(a_j)(\varphi_i + \varphi_j + \varphi_k) |
  \nonumber \\
    &\le (w(a_i) - w(a_j))\varphi_i + (w(a_j) - w(a_k))\varphi_k + (w(a_j) - w(a_k))
  \nonumber \\
    &= (w(a_i) - w(a_j))\varphi_i + (w(a_j) - w(a_k))(1+\varphi_k). 
\end{align*}
Applying $\int_T \varphi_i = \int_T \varphi_k = |T|/6$, we find
\begin{align}\label{eqn:2dc_008}
\int_T|w| \le (w(a_i) - w(a_j)) \f{|T|}{6} + (w(a_j) - w(a_k))\f{7|T|}{6}
          \le (w(a_i) - w(a_k)) \f{7|T|}{6}.
\end{align}
Putting together \eqref{eqn:2dc_006a} with \eqref{eqn:2dc_008} yields the 
desired estimate \eqref{eqn:L1_b2}, which is suitable for comparison with
the first result, \eqref{eqn:L1_b1}.
\end{proof}

We next consider the analogous estimates in the case where the function 
$w$ is positive at two vertices.  Notably, the results in this second case 
closely resemble the first.
There are however minor differences in calculations to obtain each estimate.
These two cases together are sufficient to prove the comparison theorem that follows.
\begin{lemma}\label{lemma:T2}
Let Assumptions \ref{A1:elliptic} and \ref{A1:Lipschitz} hold, and 
let $u_1, u_2, w, v \in \cV_{0,D}$, subordinate to some fixed 
$\cT$ and  $\Gamma_D$. 
Suppose there is some triangle $T \in \cT$ for which 
$w(x)$ is positive at two vertices, and nonpositive at the third. 
Assign the set of indices $\cN_T = \{i,j,k\}$ to take
the values $\{1,2,3\}$ so that $w(a_i) \ge w(a_j) > 0$, and $0 \ge w(a_k)$.  
Define the test function $v(x)$ by it's nodal values by $v(a_i) = v(a_j) = 1$, 
and $v(a) = 0$ at all other mesh vertices $a \in \cD$.
Then the following inequalities hold.
\begin{align}\label{eqn:L2_b1}
\int_T \kappa(x,u_1) \grad w^T  \grad v 
   &
\ge (w(a_i) - w(a_k)) \f{|e_i||e_k|}{4|T|} (k_\alpha \gamma_T c_T),
\end{align}
and
\begin{align}\label{eqn:L2_b2}
 \int_T (\kappa(x,u_2) - \kappa(x,u_1)) \grad u_2^T \grad v
 & \le (w(a_i) - w(a_k)) \f{|e_i||e_k|}{4|T|}\f{7L_0}{6}\left(1 +\gamma_T^{-1} \right)
   \nonumber \\ 
   & \times \max_{i',j'}|u_2(a_{i'}) - u_2(a_{j'})|.
\end{align}

\end{lemma}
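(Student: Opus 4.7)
The proof should mirror the structure of Lemma \ref{lemma:T1} but exploit the key observation that with $v(a_i)=v(a_j)=1$ and $v(a_k)=0$, the test function restricted to $T$ equals $\varphi_i+\varphi_j=1-\varphi_k$, so $\grad v = -\grad \varphi_k$. This makes $\grad v$ proportional to a single basis gradient, which streamlines both estimates. I also expect to use the identity $\grad(\varphi_i+\varphi_j+\varphi_k)=0$ to rewrite any piecewise linear function $\psi$ on $T$ as $\grad \psi = (\psi(a_i)-\psi(a_k))\grad\varphi_i + (\psi(a_j)-\psi(a_k))\grad \varphi_j$, which removes the $\grad\varphi_k$ contribution and makes the inner products with $\grad v = -\grad\varphi_k$ transparent via \eqref{eqn:gradTgrad}.

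For the lower bound \eqref{eqn:L2_b1}, the plan is to expand $\grad w$ in the form above, so that
\[
\int_T \kappa(x,u_1)\grad w^T \grad v = -(w(a_i)-w(a_k))\int_T \kappa\,\grad\varphi_i^T\grad\varphi_k - (w(a_j)-w(a_k))\int_T \kappa\,\grad\varphi_j^T\grad\varphi_k .
\]
The ordering $w(a_i)\ge w(a_j)>0\ge w(a_k)$ makes both prefactors strictly positive, and the acuteness condition in Assumption \ref{A2:meshregularity} makes each $\int_T\grad\varphi_m^T\grad\varphi_k$ nonpositive, so both terms are nonnegative. Bounding $\kappa\ge k_\alpha$ and applying \eqref{eqn:int_gradTgrad}, together with $\cos\theta_j,\cos\theta_i\ge c_T$, yields the sum $\tfrac{k_\alpha c_T}{4|T|}\{(w(a_i)-w(a_k))|e_i||e_k|+(w(a_j)-w(a_k))|e_j||e_k|\}$. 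Dropping the second (nonnegative) term and using $\gamma_T\le 1$ to match the normalization of \eqref{eqn:L1_b1} delivers \eqref{eqn:L2_b1}.

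For the upper bound \eqref{eqn:L2_b2}, the same expansion applied to $u_2$ gives
\[
\int_T (\kappa_2-\kappa_1)\grad u_2^T\grad v = -\!(u_2(a_i)-u_2(a_k))\!\int_T\!(\kappa_2-\kappa_1)\grad\varphi_i^T\grad\varphi_k - \!(u_2(a_j)-u_2(a_k))\!\int_T\!(\kappa_2-\kappa_1)\grad\varphi_j^T\grad\varphi_k .
\]
Taking absolute values, applying the Lipschitz bound \eqref{eqn:A1:Lipschitz} to pull out $\int_T|w|$, and using $|\cos\theta_\cdot|\le 1$ with $|e_j|\le \gamma_T^{-1}|e_i|$ (from \eqref{eqn:edge_rat}) reduces the prefactor to $\tfrac{L_0\,|e_i||e_k|(1+\gamma_T^{-1})}{4|T|^2}\max_{i',j'}|u_2(a_{i'})-u_2(a_{j'})|$. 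What remains is an $L^1$ bound on $|w|$ that scales like $\tfrac{7|T|}{6}(w(a_i)-w(a_k))$ to match \eqref{eqn:L1_b2}.

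The only step that is genuinely different from Lemma \ref{lemma:T1} is this $L^1$ estimate on $|w|$, and it is the place to be a bit careful. The plan is to rewrite $w = w(a_j) + (w(a_i)-w(a_j))\varphi_i + (w(a_k)-w(a_j))\varphi_k$, so that the first two terms are nonnegative while the last is nonpositive. The triangle inequality gives $|w|\le w(a_j)+(w(a_i)-w(a_j))\varphi_i+(w(a_j)-w(a_k))\varphi_k$, and integrating with $\int_T\varphi_i=\int_T\varphi_k=|T|/6$ yields $\int_T|w|\le |T|\bigl(w(a_j)+\tfrac{1}{6}(w(a_i)-w(a_k))\bigr)$; using $w(a_j)\le w(a_i)\le w(a_i)-w(a_k)$ (because $w(a_k)\le 0$) collapses this to $\tfrac{7|T|}{6}(w(a_i)-w(a_k))$, exactly as in the one-positive-vertex case. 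Combining this with the prefactor computed above gives \eqref{eqn:L2_b2}, completing the proof.
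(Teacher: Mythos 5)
Your proposal is correct and follows essentially the same route as the paper's proof: write $\grad v=-\grad\varphi_k$ via \eqref{eqn:grad_id}, use the acuteness condition to fix the signs of the off-diagonal terms, bound the diffusion coefficient below by $k_\alpha$ and the difference of coefficients by $L_0|w|$, and control the edge ratios with $\gamma_T$ (your grouping eliminates $\grad\varphi_k$ where the paper eliminates $\grad\varphi_j$, but this is cosmetic and both yield \eqref{eqn:L2_b1}--\eqref{eqn:L2_b2}). Your explicit re-derivation of $\int_T|w|\le \tfrac{7|T|}{6}(w(a_i)-w(a_k))$ for the ordering $w(a_i)\ge w(a_j)>0\ge w(a_k)$ is a welcome extra step, since the paper simply reuses \eqref{eqn:2dc_008}, which was derived under the sign pattern of Lemma \ref{lemma:T1}.
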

\begin{proof}
In this case the test function $v$ is given by 
$\varphi_i + \varphi_j$.
The identity \eqref{eqn:grad_id} then implies $\grad v = - \grad \varphi_k$,
yielding
\begin{align}\label{eqn:2dc010}
\grad w^T \grad v  
  & = \sum_{n \in \cN_T} -w(a_n)\grad \varphi_n^T \grad \varphi_k
  \nonumber \\
  & = -(w(a_i) - w(a_j)) \grad \varphi_i^T \grad \varphi_k
      +(w(a_j) - w(a_k)) \grad \varphi_k^T \grad \varphi_k,
\end{align}
where the angle condition \eqref{eqn:meshregularity} implies the non-negativity 
of the first term, and the positivity of the second is clear from the local
ordering of the nodes. Applying the positivity of $\kappa(x,u_1)$ from \eqref{eqn:A1:elliptic},
the expansion \eqref{eqn:2dc010} and 
integration over $T$ using \eqref{eqn:int_gradTgrad} allows 
\begin{align}\label{eqn:2dc_011}
\int_{T} \kappa(x,u_1) \grad w^T \grad v 
  & \ge 
\f{k_\alpha c_T}{4|T|} \left\{(w(a_i) - w(a_j)) |e_i||e_k|
      +(w(a_j) - w(a_k)) |e_k|^2 \right\}.
\end{align}
As in the previous lemma, using \eqref{eqn:edge_rat} to bound the minimum ratio 
of edges in \eqref{eqn:2dc_011} in terms of  $\gamma_T$, we have the result
\eqref{eqn:L2_b1}.

For the second estimate, first expand $u_2$ as a linear combination of basis functions, 
and apply $\grad v = -\grad \varphi_k$.
\begin{align}\label{eqn:2dc_012}
-\grad u_2^T \grad v 
  & = \sum_{n \in \cN_T} u_2(a_n) \grad \varphi_n^T \grad \varphi_k
  \nonumber \\
  & = (u_2(a_i) - u_2(a_j)) \grad \varphi_i^T \grad \varphi_k
    +(u_2(a_k) - u_2(a_j)) \grad \varphi_k^T \grad \varphi_k.
\end{align}
Then, using the expansion of \eqref{eqn:2dc_012}, apply the Lipschitz condition
\eqref{eqn:A1:Lipschitz} to bound the integral over $T$.
\begin{align}\label{eqn:2dc_013}
\int_{T} (\kappa(x,u_2) - \kappa(x,u_1)) \grad u_2^T \grad v 
  & = (u_2(a_i) - u_2(a_j))\int(\kappa (x,u_1) - \kappa(x,u_2)) 
                           \grad \varphi_i^T \grad \varphi_k
  \nonumber \\
  & + (u_2(a_k) - u_2(a_j))\int(\kappa (x,u_1) - \kappa(x,u_2)) 
                           \grad \varphi_k^T \grad \varphi_k
  \nonumber \\
  & \le |u_2(a_i) - u_2(a_j)|\f{L_0|e_i||e_k|}{4|T|} \int_T|w|
  \nonumber \\  
  & +   |u_2(a_k) - u_2(a_j)|\f{L_0|e_k|^2}{4|T|} \int_T|w|
  \nonumber \\
 & \le \f{L_0(|e_k|^2 + |e_i||e_k|)}{4|T|^2}\max_{i',j'}|u_2(a_{i'}) - u_2(a_{j'})| 
       \int_T |w|.
\end{align}\label{eqn:2dc_013a}
The integral of $|w|$ over $T$ is also bounded by \eqref{eqn:2dc_008}.
Putting this together with
\eqref{eqn:2dc_013}, and applying $|e_k| \le \gamma_T^{-1}|e_i|$ from
\eqref{eqn:edge_rat}, allows 
the second part of the result, \eqref{eqn:L2_b2}, which is in suitable form
for comparison with the first part, \eqref{eqn:L2_b1}.
\end{proof}
With the technical lemmas in hand, we are now ready to prove a 
comparison theorem in two dimensions.  The corollary
to this theorem is uniqueness of the discrete solution under mixed 
Dirichlet/Neumann boundary conditions.  
\begin{theorem}[Comparison Theorem in 2D] 
\label{thm:2dcompare}
Let Assumptions \ref{A1:elliptic} and \ref{A1:Lipschitz} hold,
let $f_1 \an f_2 \in L_2(\Omega)$ satisfy $f_1 \le f_2, \ale x \in \Omega$;
and, let $\psi \in H^{1/2}(\Gamma_N)$.
Let $u_i \in \cV_{0,D}, ~i = 1,2$, denote the solution to the respective weak forms 
of \eqref{eqn:PDE_strong}, subject to the following boundary conditions. 
\begin{align}\label{eqn:2D_bc}
u = 0 \on \Gamma_D, \an \kappa(x,u) \grad u^T {\boldsymbol n} = \psi  \on \Gamma_N,
\end{align}
with $n$ the outward-facing normal on $\Gamma_N$.
In particular
\begin{align}\label{eqn:2Dweak_diri}
(\kappa(x,u_i) \grad u_i, \grad v) = (f_i,v)  + \int_{\Gamma_N} \psi v 
~\tforall v \in \cV_0, \quad i = 1,2.
\end{align} 
Then under the condition
\begin{align}\label{eqn:2D:var_cond}
\max_{T \in \cT } \left( \gamma_T^{-1}(1 + \gamma_T^{-1})
       \max_{i',j' \in \{1,2,3\}} |u_2(a_{i'}) - u_2(a_{j'})| \right) 
< \f{6 k_\alpha c_T}{7 L_0},
\end{align} 
it holds that $u_1 \le u_2$.
\end{theorem}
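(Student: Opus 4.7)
The plan is to mimic the one-dimensional argument of Theorem~\ref{thm:1dcompare}, now driven by the two element-wise estimates from Lemmas~\ref{lemma:T1} and~\ref{lemma:T2}. First I subtract the two weak forms \eqref{eqn:2Dweak_diri}; since both solutions share the same Neumann data $\psi$, the boundary integrals cancel and I obtain
\begin{align}\label{eqn:plan2d_001}
(\kappa(x,u_1) \grad w, \grad v) = ((\kappa(x,u_2) - \kappa(x,u_1)) \grad u_2, \grad v) + (f_1 - f_2, v),
\end{align}
for every $v \in \cV_{0,D}$, where $w \coloneqq u_1 - u_2$.

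Next I suppose for contradiction that $w(a) > 0$ at some vertex $a \in \cD$, and build a test function $v \in \cV_{0,D}$ by setting $v(a) = 1$ whenever $w(a) > 0$ and $v(a) = 0$ otherwise. Because $w$ vanishes at every vertex on $\Gamma_D$, so does $v$, hence $v$ is admissible. The key observations are that $v \ge 0$ pointwise (so $(f_1 - f_2, v) \le 0$), and that $\grad v \equiv 0$ on any triangle whose three vertices all share the same sign condition (all $w > 0$ or all $w \le 0$). Only the subcollection $\cT^\ast \subset \cT$ of mixed-sign triangles contributes. On each $T \in \cT^\ast$, the restriction $v|_T$ matches exactly the local test function of one of the two lemmas: $\varphi_i$ when $w$ is positive at one vertex of $T$, and $\varphi_i + \varphi_j$ when it is positive at two. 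Moreover, connectivity of the mesh together with $w = 0$ at every vertex on the positive-measure set $\Gamma_D$ guarantees $\cT^\ast \ne \emptyset$ as soon as $w$ is positive somewhere.

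Assembling the per-triangle inequalities from Lemmas~\ref{lemma:T1} and~\ref{lemma:T2}, summing over $\cT^\ast$, and combining with \eqref{eqn:plan2d_001} and $(f_1 - f_2, v) \le 0$, I expect to arrive at
\begin{align*}
\sum_{T \in \cT^\ast} (w(a_i) - w(a_k)) \frac{|e_i||e_k|}{4|T|} \left\{ k_\alpha \gamma_T c_T - \frac{7 L_0}{6}(1 + \gamma_T^{-1}) \max_{i',j'}|u_2(a_{i'}) - u_2(a_{j'})| \right\} \le 0,
\end{align*}
with indices $i,k$ understood in the local numbering appropriate to each triangle. Hypothesis \eqref{eqn:2D:var_cond} is precisely the rearrangement $k_\alpha \gamma_T c_T > \frac{7L_0}{6}(1+\gamma_T^{-1})\max_{i',j'}|u_2(a_{i'}) - u_2(a_{j'})|$ on every $T$, so each bracket is strictly positive. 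Since $w(a_i) > 0 \ge w(a_k)$ on every $T \in \cT^\ast$ by construction, each summand is strictly positive, and nonemptiness of $\cT^\ast$ forces the left-hand side to be strictly positive, contradicting the displayed inequality. Consequently $w \le 0$ throughout $\Omega$, which is the desired $u_1 \le u_2$.

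I expect the main obstacle to be the careful bookkeeping needed to reconcile the globally defined $v$ with the locally-renumbered test functions of the two lemmas, and to certify that the excluded triangles (unanimous-sign triangles) genuinely contribute nothing to either side of \eqref{eqn:plan2d_001}. Once that matching is in place, the heavy lifting has been absorbed into Lemmas~\ref{lemma:T1} and~\ref{lemma:T2}, and the remainder is a sign count entirely parallel to the one-dimensional proof.
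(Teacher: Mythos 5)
Your proposal is correct and follows essentially the same route as the paper: subtract the weak forms, build the indicator-type test function on the positive set of $w$, restrict attention to the mixed-sign triangles (the paper's $\cT_1\cup\cT_2$), invoke Lemmas~\ref{lemma:T1} and~\ref{lemma:T2} elementwise, and derive the sign contradiction from \eqref{eqn:2D:var_cond}. The only difference is cosmetic: you correctly observe that it is $\grad v$ (not $\grad w$) that vanishes on the unanimous-sign triangles, which cleans up a slip in the paper's own wording.
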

\begin{proof}
By \eqref{eqn:2Dweak_diri} and subtraction
\begin{align}\label{eqn:2dc_001}
(\kappa(u_1)\grad w, \grad v) = ((\kappa(u_2) - \kappa(u_1))\grad u_2, \grad v)
+ (f_1 - f_2, v),
\end{align} 
where $w = u_1 - u_2$. We prove the theorem by contradiction. Assume there is 
at least one node $d\in \cD$ on which $u_{1}(d) > u_{2}(d)$, i.e. $w(d) >0.$ 
Then the test function $v$ may be defined by its nodal values: 
one at nodes where $w$ is positive, and zero elsewhere. 
\begin{align}\label{eqn:vdef}
v(d) = \left\{ \begin{array}{ll}
1, & ~\text{ for } d \in \cD ~\text{ with }~ w(d) > 0, \\
0, & ~\text{ for } d \in \cD ~\text{ with }~ w(d) \le 0.
\end{array}\right.
\end{align}
Then $w$ is zero on $\Gamma_D$ so that $v \in \cV_{0,D}$. 
For each triangle $T \in \cT$, $v(a_i) = 0$, on either one,
two, or all three vertices $a_i, ~i = 1, 2, 3.$
Define the following subsets of partition $\cT$ with respect to this property.
\begin{align}\label{eqn:2dc_002}
\cT_1 &= \{ T \in \cT ~\rest~ v(a_i) = 1, ~\text{ on exactly one vertex }~ a_i \of T\}, \\
\cT_2 &= \{ T \in \cT ~\rest~ v(a_i) = 1, ~\text{ on exactly two vertices }~ a_i, a_j, 
          i \ne j, \of T\}.
\end{align}
Due to the fact that $\Gamma_{D} \neq \emptyset$, it is clear that $\cT_{1}\cup \cT_{2}$ is nonempty.  
As $\grad w = 0$, over $\cT \setminus \{\cT_1 \cup \cT_2\}$,
equation \eqref{eqn:2dc_001} can then be written in terms of elementwise integration
over partitions $\cT_1$ and $\cT_2$.
\begin{align}\label{eqn:2dc_002a}
\sum_{T \in (\cT_1 \cup \cT_2)} \int_T \kappa(u_1) \grad w^T \grad v = 
\sum_{T \in (\cT_1 \cup \cT_2)} \int_T (\kappa(u_2) - \kappa(u_1)) \grad u_2^T 
\grad v + (f_1 - f_2,v).
\end{align}
Bounds on the left and right hand sides of \eqref{eqn:2dc_001} have been
worked out  
by elementwise integration, first for $T \in \cT_1$, in Lemma \ref{lemma:T1},
then for $T \in \cT_2$ in Lemma \ref{lemma:T2}.
Putting together the results of these two lemmas,
\eqref{eqn:L1_b1}, \eqref{eqn:L1_b2}, \eqref{eqn:L2_b1} and \eqref{eqn:L2_b2}, 
into \eqref{eqn:2dc_002a}, we have
\begin{align}\label{eqn:2dc_015}
&
\sum_{T \in (\cT_1 \cup \cT_2)} (w(a_i) - w(a_k)) \f{|e_i||e_k|}{4|T|} 
     \left\{ k_\alpha \gamma_T c_T - \f{7 L_0}{6}\left(1 + \gamma_T^{-1} \right)
      \max_{i',j'}|u_2(a_{i'}) - u_2(a_{j'})|\right\}
\nonumber \\
\le& (f_1 - f_2,v) \le 0. 
\end{align}
As the terms to the left of the brackets on each line of \eqref{eqn:2dc_015} are
strictly positive, a contradiction is attained under the condition 
\eqref{eqn:2D:var_cond}.
This establishes the comparison result.
\end{proof}
It is emphasized that condition \eqref{eqn:2D:var_cond} is a local condition, 
and it differs from the one dimensional analogue \eqref{eqn:1dun003}, by 
a factor related to the mesh geometry and in particular to smallest
angle. As in one dimension, the two dimensional comparison theorem
leads to the main result.
\begin{corollary}[Uniqueness of the FE Solution in 2D]
\label{cor:2d_unique}
Let Assumptions \ref{A1:elliptic} and \ref{A1:Lipschitz} hold, 
let $f \in L_2(\Omega)$ and $\psi \in H^{1/2}(\Gamma_N)$.
Let $u \in \cV_{0,D}$, denote the solution to the weak form
of \eqref{eqn:PDE_strong}, subject to the following boundary conditions. 
\begin{align*}
u = 0 \on \Gamma_D, \an \kappa(x,u) \grad u^T {\boldsymbol n} = \psi  \on \Gamma_N.
\end{align*}
In particular
\begin{align}\label{eqn:2dweakpde}
(\kappa(x,u) \grad u, \grad v) = (f,v)  + \int_{\Gamma_N} \psi v 
~\tforall v \in \cV_{0,D}.
\end{align} 
Then under the condition 
\begin{align}\label{eqn:2D:var_condu}
\max_{T \in \cT } \left( \gamma_T^{-1}(1 + \gamma_T^{-1})
       \max_{i',j' \in \{1,2,3\}} |u(a_{i'}) - u(a_{j'})| \right) 
< \f{6 k_\alpha c_T}{7 L_0},
\end{align} 
({\em cf.,} \eqref{eqn:2D:var_cond}), the linear FE solution $u$
is the unique solution to \eqref{eqn:2dweakpde}.
\end{corollary}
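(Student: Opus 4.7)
The plan is to reduce this corollary directly to Theorem~\ref{thm:2dcompare}, in exact parallel with the way Corollary~\ref{cor:1d_unique} was deduced from Theorem~\ref{thm:1dcompare} in one dimension. The first step is to specialize the comparison theorem by taking $f_1 = f_2 = f$ (and the same Neumann datum $\psi$), so that the hypothesis $f_1 \le f_2$ of Theorem~\ref{thm:2dcompare} is trivially satisfied as an equality, and both sides of the inhomogeneity in~\eqref{eqn:2Dweak_diri} reduce to the common right-hand side $(f,v) + \int_{\Gamma_N}\psi v$ of~\eqref{eqn:2dweakpde}.

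Next, I would let $u_1, u_2 \in \cV_{0,D}$ be two (potentially distinct) solutions of~\eqref{eqn:2dweakpde}, each satisfying the local variation bound~\eqref{eqn:2D:var_condu}, and apply Theorem~\ref{thm:2dcompare} twice. In the first application, take the ``$u_2$'' of the comparison theorem to be the present $u_2$: the hypothesis~\eqref{eqn:2D:var_cond} is then exactly~\eqref{eqn:2D:var_condu} for $u_2$, so it holds, and the conclusion is $u_1 \le u_2$. In the second application, swap the roles of the two solutions; since~\eqref{eqn:2D:var_condu} is also assumed for $u_1$, the hypothesis of the comparison theorem holds once more and yields $u_2 \le u_1$. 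Combining the two inequalities pointwise on $\Omega$ gives $u_1 = u_2$, which is the uniqueness assertion.

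There is no genuine obstacle here; the entire argument is a short deduction from the comparison theorem, and no further estimates involving $\kappa$, the mesh geometry, or the constants $k_\alpha, L_0, \gamma_T, c_T$ need to be redone. The only point that deserves brief comment, inherited from the one-dimensional analog in Corollary~\ref{cor:1d_unique}, is that the statement is most naturally read as saying that at most one solution of~\eqref{eqn:2dweakpde} satisfies the local variation bound~\eqref{eqn:2D:var_condu}; accordingly, when applying Theorem~\ref{thm:2dcompare} in both directions, the variation bound is invoked for whichever of the two candidate solutions is playing the role of ``$u_2$'' in that invocation.
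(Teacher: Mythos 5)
Your proposal is correct and matches the paper's own proof essentially verbatim: set $f_1 = f_2 = f$ in Theorem~\ref{thm:2dcompare}, apply the comparison result in both directions to two candidate solutions, and conclude $u_1 \le u_2$ and $u_2 \le u_1$. Your closing remark about the variation bound being needed for whichever solution plays the role of $u_2$ is a fair reading of the statement and is handled implicitly, not differently, in the paper.
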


\begin{proof}
Let $f_1 = f_2 = f$ in Theorem ~\ref{thm:2dcompare}. Then, let $u_1$ and $u_2$ be
two respective solutions to problem \eqref{eqn:2Dweak_diri}. Applying the results
of Theorem \ref{thm:2dcompare} yields both $u_1 \le u_2$ and $u_2 \le u_1$.  
Notably, the argument again does not
depend on either local or global meshsizes, only the variation in the discrete 
piecewise linear solution over each element. This can be controlled in an 
adaptive algorithm by monitoring
the nodal values over each element and bisecting triangles where the condition
for uniqueness is violated.
\end{proof}

\begin{remark}
A sufficient condition for uniqueness is then given by
\begin{align}\label{eqn:2D:alt_cond}
\max_{T \in \cT}\max_{i',j' \in \{1,2,3\}} |u(a_{i'}) - u(a_{j'})| 
< \f{6 k_\alpha}{7 L_0} \left( \f{s_{min}^2 c_{\min}}{1 + s_{min}} \right),
\end{align} 
where $s_{min} = \sin(t_{min})$, 
and $c_{min} = \min_{T\in\cT} c_T$, by applying the global bound $s_{min} \le \gamma_T \le 1$.  While this condition
appears easier to check if used in an adaptive algorithm, the condition 
\eqref{eqn:2D:var_condu} is preferred. In particular, the locally angle-dependent
version better characterizes
the condition for uniqueness based on both the maximum variation of solution $u$ over
an element $T$, together with the condition of that triangle.  
For instance, if the solution varies rapidly where the mesh is very regular, and
the solution varies slowly where the mesh is poorly conditioned in the sense of 
containing small angles, then \eqref{eqn:2D:alt_cond} gives an overly
pessimistic requirement.
\end{remark}

\subsection{Addressing the 2D counterexample of \cite{AnCh96a}}
\label{subsec:2Dcounter}
We refer the reader to \cite{AnCh96a} for the full details of a 2D Dirichlet problem
for which there are at least two discrete solutions on a given mesh.  We 
demonstrate here the uniqueness condition \eqref{eqn:2D:var_condu} is not satisfied
on the given mesh, detecting the possibility of nonuniqueness. 
The example and present analysis are similar to the one dimensional analogue. 

In this example, $\Omega$ is the interior of the equilateral triangle with sides 
of length 1. The mesh $\cT_h$ is a uniform partition,
for which each element is an equilateral triangle with sides length $h$. 
The diffusion coefficient $\kappa(x,z)$ is constructed on $\cT_h$ given a function 
$u \in \cV_{0,D} \subset H_0^1(\Omega)$ that satisfies $u(x) = 0$ for 
$x \in \Gamma_D = \pa \Omega$, and $u(x) > 0$  for $x \in \Omega$. 

The coefficient $\kappa(x,z)$ is constructed in terms of a $C^\infty(\Omega)$ function
$\phi(x)$ that satisfies 
\begin{align}\label{eqn:2Dcounter_001}
\phi = 1 \on \Gamma, 
\quad |\grad \phi| = 0 \on \Gamma, 
\quad \f 1 4 \le \phi \le 1, 
\quad \an ~\int_\Omega \phi = \f{\sqrt 3}{8}. 
\end{align}
Moreover, $\phi$ is assumed invariant to the isometries of $\Omega$ and defined 
locally in each element $T$ by change of variables to a reference element 
coinciding in shape with the global domain $\Omega$. Local coordinates in each element 
$T$ are described by 
$y_T: T \goto \Omega$, an invertible affine maps for each element $T \in \cT$ 
that assigns to each point 
$x \in T$ the point $y_T(x) \in \Omega$ with the same barycentric coordinates. 
It is outlined that
\begin{align}\label{eqn:2Dcounter_002}
\f{\pa}{\pa z} \kappa(x,z) = \f{\phi(y_T(x)) -1}{u(x)}, 
\quad u(x) \le z(x) \le 2u(x).
\end{align}
Now consider an element $T_1 \in \cT$ for which two vertices lie on $\pa \Omega$, 
and one vertex lies in the interior of $\Omega$. 
If the problem has any degrees of freedom, such an element must exist.
Denote the interior vertex as
$a_1$.  Then $u_1 \coloneqq u(a_1) > 0$. For clarity of presentation, assign 
the coordinate system $(t_1,t_2)$ to the reference domain so that $\Omega$ has
vertices  $\{(1/2, \sqrt 3/2), (0,0), (1,0)\}$.  
Let $y_T$ map vertex $a_1$ of $T_1$ to $(1/2, \sqrt 3/2)$.
Then $\phi(y_T(x)) = \phi((t_1, t_2))$ for each $x \in T$ and 
some $(t_1, t_2) \in \Omega$. 
Define $\breve \phi (t_2) \coloneqq \phi((1/2,t_2))$.
Then because $u(x)$ is an affine function in $T_1$, and $u(a_2) = u(a_3) = 0$, 
$u(x)$ on $T_1$ is dependent only on reference coordinate $t_2$, namely,
$u(y_T^{-1}(\cdot,t_2)) = (2/\sqrt 3) t_2 u_1$.
Apply this now to \eqref{eqn:2Dcounter_002} in 
triangle $T_1$, restricted to $x = y_T^{-1}((1/2,t_2))$, for $0 \le t_2 \le \sqrt 3/2$.

\begin{align}\label{eqn:2Dcounter_003}
\f{\phi((1/2,t_2)) - 1}{u(y_T^{-1}((1/2,t_2)))}
   = \f{\phi((1/2,t_2)) - \phi((1/2,0))}{ (2/\sqrt 3) t_2 u_1}
   = \left( \f{\sqrt 3}{2 u_1} \right) \f{\breve \phi(t_2) - \breve \phi(0)}{t_2}.
\end{align}

The definition of $\phi$ in \eqref{eqn:2Dcounter_001} with $k = 1/4$ 
makes sense for $0 < k < 1/2$, so we address $k$ in this range.
For any $0 <k < 1/2$, it suffices to consider the graph of $\breve \phi$ 
on the interval $0 \le t_2 \le \sqrt 3/2$.  The slope of the secant line given
in the right hand side term of \eqref{eqn:2Dcounter_003}, 
$(\breve \phi(t_2) - \breve \phi(0))/t_2,$ must somewhere
be greater in magnitude of the slope of the 
line connecting $\psi(0) = 1$ to $\psi(\sqrt 3/2) = k$, 
that is $2(1-k)/\sqrt 3$.  
Then the right side of \eqref{eqn:2Dcounter_003} must be
greater than $(1-k)/u_1$ for some $t_2$.
From \eqref{eqn:2Dcounter_002}, this shows the Lipschitz constant $L_0 > (1-k)/u_1$,
for $k_\alpha = k$.
For the equilateral mesh,
$\gamma_T = 1$ and  $c_T = \cos(\pi/3) = 1/2$ for all $T \in \cT$, 
and condition \eqref{eqn:2D:var_condu} then requires
\begin{align} 
u_1 < \f{3k}{14(1-k)} u_1 < u_1. 
\end{align}
This shows that
condition \eqref{eqn:2D:var_condu} detects the possibility of nonuniqueness.  

\section{Conclusion}
\label{sec:conclusion}
We demonstrated here uniqueness of the continuous piecewise linear finite element 
solution for a class of nonmonotone quasilinear elliptic problems under either 
Dirichlet or mixed Dirichlet/Neumann conditions. The main innovation 
is establishing these results without relying on the requirement of a 
globally fine mesh.  It has been observed in practice that this condition
while sufficient for the guarantee of well-posedness of the discrete 
problem, appears not to be necessary.  We established in this analysis
that it is sufficient for the variance of the solution over each 
element to be bounded by a multiple of the ratio of the ellipticity and Lipschitz 
constants for a given problem.  This result is important in the 
analysis of adaptive methods for this class of problems, as uniqueness of the
solution can now be assured based on local and computable criteria.


\bibliographystyle{abbrv}
\bibliography{refsTRN,maximum}



\end{document}